\numberwithin{equation}{section}
\newtheorem{lemma}{Lemma}[section]
\newtheorem{theorem}{Theorem}[section]
\newtheorem{remark}{Remark}
\newcommand{\Rmnum}[1]{\expandafter\@slowromancap\romannumeral #1@}
\begin{document}
\begin{frontmatter}
\def\ll{\mbox{ } \hskip 1em}
\def\matrix{\,\vcenter\bgroup\plainLet@\plainvspace@
    \normalbaselines
   \math\ialign\bgroup\hfil$##$\hfil&&\quad\hfil$##$\hfil\crcr
      \mathstrut\crcr\noalign{\kern-\baselineskip}}
\def\mathbb{}

\title{Pointwise error estimates of compact difference scheme for mixed-type time-fractional Burgers' equation}

\author[mymainaddress1]{Xiangyi Peng}
\ead{pengxy202207@163.com}
\author[mymainaddress1]{Da Xu}
\ead{daxu@hunnu.edu.cn}
\author[mymainaddress1]{Wenlin Qiu\corref{mycorrespondingauthor}}
\ead{qwllkx12379@163.com}

\cortext[mycorrespondingauthor]{Corresponding author. The work was supported by National Natural Science Foundation of China (No. 12071127).}
\address[mymainaddress1]{MOE-LCSM, School of Mathematics and Statistics, Hunan Normal University, Changsha, Hunan 410081, China}

\begin{abstract}
 In this paper, based on the developed nonlinear fourth-order operator and method of order reduction, a novel fourth-order compact difference scheme is constructed for the mixed-type time-fractional Burgers' equation, from which $L_1$-discretization formula is employed to deal with the terms of fractional derivative, and the nonlinear convection term is discretized by nonlinear compact difference operator. Then a fully discrete compact difference scheme can be established by  approximating spatial second-order derivative with classic compact difference formula. The convergence and stability are rigorously proved in the $L^{\infty}$-norm by the energy argument and mathematical induction. Finally, several numerical experiments are provided to verify the theoretical analysis. 
\end{abstract}

\begin{keyword}
Mixed-type time-fractional Burgers' equation \sep compact difference scheme \sep pointwise error estimate \sep stability \sep numerical experiments
\end{keyword}

\end{frontmatter}

\section{Introduction}
\vskip 0.2mm
In this article, considering the following mixed-type time-fractional Burgers' equation
\begin{equation}\label{eq1.1}
\begin{array}{ll}
\mu_1\mathbb{D}_{t}^{(\alpha+1)}u+\mu_2\mathbb{D}_{t}^{(\alpha)}u+uu_x=\lambda u_{xx}(x,t), \qquad  (x,t)\in (0,L)\times (0,T],\quad \alpha\in (0, 1),
\end{array}
\end{equation}
with the initial conditions and the boundary conditions
\begin{equation}\label{eq1.2}
u(x,0)=\varphi_1 (x),\quad u_t(x,0)=\varphi_2 (x), \qquad x \in [0,L],
\end{equation}
\begin{equation}\label{eq1.3}
\begin{array}{ll}
u(0,t)=u(L,t)=0, \quad t \in [0, T],
\end{array}
\end{equation}
where $\lambda >0$ is the coefficient of kinematic viscosity, $\mu_1,\mu_2 \geq 0$ ($\mu_1^2+\mu_2^2\neq 0$), $\varphi_1(x)$ and $\varphi_2(x)$ are given functions, and the notation $\mathbb{D}_{t}^{(\beta)}u $ denotes the Caputo fractional derivative \cite{Fra_df1,Fra_df2}, defined by
\begin{equation*}
\mathbb{D}_{t}^{(\beta)}u(x,t) :=
 \begin{cases}
   \frac{1}{\Gamma(1-\beta)} \int_{0}^{t}\frac{\partial u(x,s)}{\partial s}(t-s)^{-\beta}ds,  &\quad 0 < \beta <1, \\
   \\
  \frac{\partial u(x,t)}{\partial t}, &\quad  \beta = 1,\\
  \\
  \frac{1}{\Gamma(2-\beta)} \int_{0}^{t}\frac{\partial^2 u(x,s)}{\partial s^2}(t-s)^{(1-\beta)}ds,  &\quad 1 < \beta <2, \\
 \end{cases}
\end{equation*}
where $\Gamma(\cdot)$ indicates the Gamma function.
\vskip 0.2mm
Burgers-type equations are the basic partial differential equations of applied mathematics, which are widely used in various fields, such as fluid mechanics \cite{Bur_yy11,Bur_yy22}, nonlinear acoustics \cite{Bur_yy21,BL}, gas dynamics \cite{Bur_yy31,Bur_yy12}, etc. And they can be used as a reference for solving complex problems such as Navier-Stokes equations. With the deepening of physical research, in past decades years, fractional Burgers-type equations had been proposed and studied, because fractional derivative can be used to describe the cumulative effect of wall friction through boundary layer \cite{app1,app2,app4}.
\vskip 0.2mm
Generally speaking, it is difficult to get the analytical solutions of partial differential equations, let alone fractional partial differential equations (FPDEs). Thus this drives people to search for high-precision and efficient numerical methods to solve FPDEs. In recent several years, a lot of studies \cite{burg_xyj2,LDF,BL,qsx2} for the time FPDEs and some profound results have been developed. As a classical PDE problem, time-fractional Burgers' equation is concerned and studied by many scholars naturally \cite{LCP,LLL,zhangqf}. For examples, the following time-fractional Burgers' equation have been studied by many scholars
\begin{equation}\label{eq1.4}
  \mathbb{D}_{t}^{(\alpha)}u+uu_x=\lambda u_{xx}, \qquad  (x,t)\in (0,L)\times (0,T],\quad \alpha\in (0, 1),
\end{equation}
which is obtained by taking $\mu_1=0$ and $\mu_2=1$ in (\ref*{eq1.1}). Generally, (\ref*{eq1.4}) belongs to parabolic time-fractional Burgers' equation. Qiu et al. \cite{qsx} considered an implicit difference scheme for one-dimensional time fractional Burgers' equation and proposed a novel iterative algorithm to implement it. Li and Li \cite{LLL} investigated the exact and numerical solutions of the time fractional Burgers' equation by using Cole-Hopf transformation,  the method of variables separation, the L1-scheme on graded meshes, and the Legendre-Galerkin spectral method. Akram \cite{burg_xyj1} developed a finite difference scheme which depended on a new approximation based on an extended cubic B-spline. And Zhang et al. \cite{zhangqf} developed fourth-order compact difference scheme. Besides, some articles have considered the following time-fractional Burgers' equation with hyperbolic properties
\begin{equation}\label{eq1.5}
  \mathbb{D}_{t}^{(\gamma)}u+uu_x=\lambda u_{xx}, \qquad  (x,t)\in (0,L)\times (0,T],\quad \gamma\in (1, 2),
\end{equation}
which is equivalent to the case of $\mu_1=1$ and $\mu_2=0$ in (\ref*{eq1.1}). Vong and Lyu \cite{vong} proposed a second-order linearized scheme for (\ref*{eq1.5}) in sense of maximum-norm. Furthermore, based on research of \cite{qsx}, Zhang \cite{zhangw} developed a semi-implicit finite difference scheme for the multi-term time-fractional Burgers-type equations.
\vskip 0.2mm
Up to now, many scholars have done a lot of work to the above two kinds of time-fractional Burgers' equations. However, researches on the numerical solutions of the mixed-type time-fractional Burgers' equations are still scant because of the complexity of numerical computation and the difficulty of theoretical analysis. Moreover, in certain physical phenomenon \cite{jinb}, mixed-type time-fractional derivative describes physical models more profound than single time-fractional derivative, which means that it is quite significant to investigate mixed-type time-fractional Burgers' equation. Inspired by some work \cite{wangxp,zhangqf2,zhangqf3,zhangqf4,zhangqf}, especially in dealing with nonlinear convection term, they ingeniously constructed fourth-order nonlinear compact operators. These encourage us to carry out the following work. The main aim of this article is to establish a fourth-order compact difference scheme for mixed-type time-fractional Burgers' equation. In this paper, we deal with the Caputo fractional derivatives by the $L_1$-discretization formula and Crank-Nicolson technique, and introduce nonlinear compact operator to treat the nonlinear convection term. The constructed compact difference scheme is stable and convergent with the convergence order of $4$ for space and $2-\alpha$ for time, which is verified by strict theoretical analysis.
\vskip 0.2mm
The rest of this paper is organized as follows. In section 2, the main is some preliminaries including grid division, some notations, and useful lemmas. Then, in section 3, establishing the compact difference scheme for the problem (\ref*{eq1.1})-(\ref*{eq1.3}). In section 4, the convergence and stability of the compact difference scheme are discussed via the discrete energy method and mathematical induction. In section 5, three numerical experiments are carried out to vaildate our theoretical analysis. Finally, a brief conclusion is given in section 6.
   \section{Preliminaries}
   \vskip 0.2mm
      In order to solve the problem (\ref*{eq1.1})-(\ref*{eq1.3}), first we divide the domain $[0,L]\times[0,T]$. Let $\omega_h:=\{x_i|0\leq i \leq M\}$ and $\omega_{\tau}:=\{t_n|0\leq n \leq N\}$ be two uniform meshes, where $x_i:=ih, h:=\frac{L}{M}$; $t_n:=n\tau$, $\tau:=\frac{T}{N}$, $M$ and $N$ are two given positive integers. Denote $\omega_{h\tau}:=\omega_h \times \omega_{\tau}$. For any grid function $v:=\{v_i^n|0\leq i \leq M,0\leq n \leq N\}$ defined on $\omega_{h\tau}$, introduce the following notations
      \begin{equation*}
        \begin{array}{ccc}
          v_i^{n-\frac{1}{2}}:=\frac{1}{2}(v_i^n+v_i^{n-1}),\qquad \delta_t v_i^{n-\frac{1}{2}}:=\frac{1}{\tau}(v_i^n-v_i^{n-1}),\qquad \delta_x v_{i-\frac{1}{2}}^n:=\frac{1}{h}(v_i^n-v_{i-1}^n),\\
          \\
          \Delta_x v_i^n:=\frac{1}{2h}(v_{i+1}^n-v_{i-1}^n),\qquad \delta_x^2 v_{i}^n:=\frac{1}{h}(\delta_x v_{i+\frac{1}{2}}^n-\delta_x v_{i-\frac{1}{2}}^n).
        \end{array}
        \end{equation*}
   \vskip 0.2mm
    And let $\Omega_h:=\{v|v=(v_0,v_1,\cdots,v_M )\}$ and $\mathring{\Omega}_h:=\{v|v\in\Omega_h, v_0=v_M=0 \}$ be the spaces of grid functions on $\omega_h$. For any $u,v \in \mathring{\Omega}_h$, we define the following inner products and norms
    \begin{equation*}
      \begin{array}{cc}
        \langle u,v \rangle:=h\sum\limits_{i=1}^{M-1}u_i v_i, \qquad (u,v):=h\sum\limits_{i=1}^{M}(\delta_x u_{i-\frac{1}{2}})(\delta_x v_{i-\frac{1}{2}}),\\
        \\
        \|v\|:=\sqrt{\langle v,v \rangle},\qquad |v|_1:=\sqrt{(v,v)},\qquad \|v\|_{\infty }:=\max \limits_{0\leq i \leq M}|v_i|.
      \end{array}
    \end{equation*}
   In addition, in order to discretize nonlinear term $uu_x$, we introduce the function $\psi$ \cite{gby,wangxp} as follows
   \begin{equation*}
    \psi(u_i,v_i):=\frac{1}{3}[u_i \Delta_x v_i+\Delta_x(u_i v_i)],\quad 1 \leq i \leq M-1.
   \end{equation*}
   \begin{remark}
    It is easy to know that $\psi$ is a bilinear function without commutativity.
  \end{remark}
  Finally, we introduce some useful lemmas which will be used later.
  \begin{lemma}\label{lem4.1}
    \cite{sunzz_book} For any grid functions $v,w \in \mathring{\Omega}_h$, we have
    \begin{equation}\label{eqn4.1}
      \langle w,\delta_x^2 v\rangle=-\langle \delta_x w,\delta_x v\rangle=\langle \delta_x^2 w,v\rangle.
    \end{equation}
   \end{lemma}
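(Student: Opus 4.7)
The statement is a standard discrete integration by parts identity, so the plan is to prove it by Abel summation using the boundary conditions $w_0 = w_M = 0$ and $v_0 = v_M = 0$ built into $\mathring{\Omega}_h$.

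First, I would unfold the definitions. Writing
\[
\langle w,\delta_x^2 v\rangle = h\sum_{i=1}^{M-1} w_i\,\delta_x^2 v_i = \sum_{i=1}^{M-1} w_i\bigl(\delta_x v_{i+\frac12}-\delta_x v_{i-\frac12}\bigr),
\]
I would split the right-hand side into two sums, re-index the first one by $j=i+1$, and then use $w_0=w_M=0$ to extend each sum harmlessly so that both sums run over $i=1,\dots,M$. Subtracting term by term collapses the expression to
\[
-\sum_{i=1}^{M}(w_i-w_{i-1})\,\delta_x v_{i-\frac12} = -h\sum_{i=1}^{M}\delta_x w_{i-\frac12}\,\delta_x v_{i-\frac12},
\]
which by definition is $-\langle \delta_x w,\delta_x v\rangle$ (the inner product $(w,v)$ on the half-integer grid, written in $\delta_x$-form as in the lemma's statement). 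This gives the first equality.

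For the second equality, I would observe that the middle expression $-\langle \delta_x w,\delta_x v\rangle$ is manifestly symmetric in $w$ and $v$. Hence swapping the roles of $w$ and $v$ in the first equality yields
\[
\langle v,\delta_x^2 w\rangle = -\langle \delta_x v,\delta_x w\rangle = -\langle \delta_x w,\delta_x v\rangle,
\]
and reading the chain backwards gives $\langle \delta_x^2 w, v\rangle = -\langle \delta_x w,\delta_x v\rangle$, closing the loop.

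There is no real obstacle here — the only thing that requires care is bookkeeping the index shift and confirming that the boundary terms $w_M\delta_x v_{M+\frac12}$ and $w_0\delta_x v_{-\frac12}$ that appear after re-indexing vanish because of $w_0=w_M=0$; the homogeneous Dirichlet condition on $v$ is not needed for the first equality, but is needed to run the symmetric argument for the second. Since the whole result is a textbook identity (and is indeed cited from \cite{sunzz_book}), I would keep the write-up to a few lines.
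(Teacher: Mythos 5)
The paper offers no proof of this lemma at all---it is simply quoted from \cite{sunzz_book}---and your summation-by-parts argument is exactly the standard proof of this textbook identity: the index shift, the use of $w_0=w_M=0$ alone for the first equality, and the symmetry of $-\langle\delta_x w,\delta_x v\rangle$ to get the second are all correct. One small bookkeeping slip in your closing remark: the boundary terms absorbed after re-indexing are $w_0\,\delta_x v_{\frac12}$ and $w_M\,\delta_x v_{M-\frac12}$, not $w_0\,\delta_x v_{-\frac12}$ and $w_M\,\delta_x v_{M+\frac12}$ (the latter would involve the undefined values $v_{-1}$ and $v_{M+1}$); this does not affect your displayed computation, which is right as written.
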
 
   \begin{lemma} \label{lemma2.1}
    \cite{sunzz_book} For any grid function $v \in \mathring{\Omega}_h$, we have 
    \begin{equation}\label{eq2.1}
      \|v\|_{\infty }\leq \frac{\sqrt{L}}{2}|v|_1,\quad \|v\|\leq \frac{L}{\sqrt{6}}|v|_1,\quad \|\Delta_x v\| \leq |v|_1,\quad |v|_1 \leq \frac{2}{h}\|v\|.
    \end{equation}
   \end{lemma}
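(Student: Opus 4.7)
I would prove the four inequalities in order; each reduces to an elementary Cauchy--Schwarz or summation-by-parts argument, so the main task is bookkeeping of constants. For the first inequality, $\|v\|_{\infty} \leq \frac{\sqrt{L}}{2}|v|_1$, I would exploit both boundary conditions $v_0 = v_M = 0$ by writing
\[
v_i = h\sum_{j=1}^{i} \delta_x v_{j-\frac{1}{2}}, \qquad v_i = -h\sum_{j=i+1}^{M} \delta_x v_{j-\frac{1}{2}}.
\]
Cauchy--Schwarz on each representation gives $v_i^2 \leq x_i A_i$ and $v_i^2 \leq (L-x_i) B_i$ with $A_i + B_i = |v|_1^2$, where $A_i$ and $B_i$ denote the partial sums of $(\delta_x v_{j-\frac{1}{2}})^2$ over the left and right halves. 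Rewriting as $v_i^2/x_i \leq A_i$, $v_i^2/(L-x_i) \leq B_i$ and adding produces the sharp pointwise bound $v_i^2 \leq \frac{x_i(L-x_i)}{L}|v|_1^2$; maximizing $x_i(L-x_i)$ over $[0,L]$ then yields the constant $L/4$ inside the square root.

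For the Poincar\'e-type inequality $\|v\| \leq \frac{L}{\sqrt{6}}|v|_1$, I would simply sum the pointwise bound above, reducing the matter to evaluating $h\sum_{i=1}^{M-1} x_i(L-x_i) = h^3 \sum_{i=1}^{M-1} i(M-i)$; the classical identity $\sum_{i=1}^{M-1} i(M-i) = M(M^2-1)/6$ gives $\frac{L^3(M^2-1)}{6M^2} \leq L^3/6$, which delivers the stated constant.

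The last two inequalities are direct. For $\|\Delta_x v\| \leq |v|_1$, I would decompose $\Delta_x v_i = \frac{1}{2}\bigl(\delta_x v_{i+\frac{1}{2}} + \delta_x v_{i-\frac{1}{2}}\bigr)$ and apply $(a+b)^2 \leq 2(a^2+b^2)$; after squaring and summing over $1 \leq i \leq M-1$, each term $(\delta_x v_{j-\frac{1}{2}})^2$ is counted at most twice, which exactly absorbs the factor $1/2$. For $|v|_1 \leq \frac{2}{h}\|v\|$, I would apply $(v_i-v_{i-1})^2 \leq 2(v_i^2+v_{i-1}^2)$ inside $|v|_1^2 = \frac{1}{h}\sum_{i=1}^{M}(v_i-v_{i-1})^2$ and use $v_0 = v_M = 0$ to fold the endpoint contributions into $\|v\|^2$.

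No single step is truly the main obstacle; if anything, slight care is needed in the first inequality to recover the sharp constant $\frac{\sqrt{L}}{2}$ rather than the looser $\sqrt{L/2}$ that results from naively bounding $\min(x_i, L-x_i) \leq L/2$. This is precisely why the reciprocal-sum combination of the two Cauchy--Schwarz estimates is used in place of taking the minimum of the two individual bounds.
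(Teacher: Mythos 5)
Your proof is correct: all four inequalities are established with the standard arguments (the two-sided Cauchy--Schwarz representation with the reciprocal combination giving the sharp constant $\frac{\sqrt{L}}{2}$, summation of the pointwise bound with $\sum_{i=1}^{M-1} i(M-i)=\frac{M(M^2-1)}{6}$ for the Poincar\'e-type estimate, the averaging identity $\Delta_x v_i=\frac{1}{2}(\delta_x v_{i+\frac{1}{2}}+\delta_x v_{i-\frac{1}{2}})$, and the elementary inverse estimate), and the constants all check out. The paper itself gives no proof, citing this lemma from the referenced textbook, and your argument is essentially the standard one found there, so no further comparison is needed.
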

   \begin{lemma}\label{lemma2.2}
    \cite{sunzz} Suppose $G(t) \in C^2[0,t_n]$, existing $\tilde{C}>0$, it holds that
    \begin{equation}\label{eq2.2}
      \begin{split}
        &\left|\frac{1}{\Gamma(1-\alpha)}\int_{0}^{t_n}\frac{G'(s)}{(t_n-s)^{\alpha}} \,ds -\frac{\tau^{-1}}{\Gamma(1-\alpha)}\mathcal{D}_t^{(\alpha)}G(t_n)  \right| \leq \tilde{C} \tau^{2-\alpha}, \quad 0<\alpha<1,\\
        &\mathcal{D}_t^{(\alpha)}G(t_n):=b_0G(t_n)-\sum\limits_{i=1}^{n-1}(b_{n-i-1}-b_{n-i})G(t_i)-b_{n-1}G(t_0),
      \end{split}
    \end{equation}
    where $b_i:=\int_{t_i}^{t_{i+1}}s^{-\alpha} \,ds=\frac{\tau^{1-\alpha}}{1-\alpha} [(i+1)^{1-\alpha}-i^{1-\alpha}]$, $i\geq 0$. 
   \end{lemma}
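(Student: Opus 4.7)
The plan is to interpret the discrete operator $\mathcal{D}_t^{(\alpha)}G(t_n)$ as arising from replacing $G$ by its piecewise linear interpolant on the partition $\{t_i\}$ inside the fractional integral, and then quantify the resulting quadrature error. First I would verify the algebraic identity
\[
\sum_{i=0}^{n-1}\frac{G(t_{i+1})-G(t_i)}{\tau}\,b_{n-i-1}=\tau^{-1}\mathcal{D}_t^{(\alpha)}G(t_n),
\]
by the index shift $k=i+1$ and a summation-by-parts rearrangement, so that the truncation error equals
\[
E_n \;=\; \frac{1}{\Gamma(1-\alpha)}\sum_{i=0}^{n-1}\int_{t_i}^{t_{i+1}}\bigl[G'(s)-(\pi G)'(s)\bigr]\,(t_n-s)^{-\alpha}\,ds,
\]
where $(\pi G)$ denotes the piecewise linear interpolant and hence $(\pi G)'$ is the constant $[G(t_{i+1})-G(t_i)]/\tau$ on each subinterval.

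Next I would split the sum into the ``regular'' intervals $i=0,\ldots,n-2$, where $(t_n-s)^{-\alpha}$ stays bounded, and the ``critical'' interval $i=n-1$ adjacent to the singularity. For $i\le n-2$, set $\Phi(s):=G(s)-(\pi G)(s)$ and note $\Phi(t_i)=\Phi(t_{i+1})=0$, so integration by parts yields
\[
R_i=-\alpha\int_{t_i}^{t_{i+1}}\Phi(s)\,(t_n-s)^{-\alpha-1}\,ds,
\]
with the standard interpolation bound $|\Phi(s)|\le \tfrac{\tau^2}{8}\|G''\|_\infty$. Summing these contributions reduces to a single telescoping integral
\[
\sum_{i=0}^{n-2}\int_{t_i}^{t_{i+1}}(t_n-s)^{-\alpha-1}\,ds=\frac{1}{\alpha}\bigl(\tau^{-\alpha}-t_n^{-\alpha}\bigr),
\]
which produces a contribution of order $\tau^{2-\alpha}$ after multiplication by $\tau^2$.

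For the last interval I would avoid integration by parts (since the weight is unbounded there) and use the direct estimate $|G'(s)-(\pi G)'(s)|\le \tau\|G''\|_\infty$ obtained from the mean value theorem together with $\int_{t_{n-1}}^{t_n}(t_n-s)^{-\alpha}ds=\tau^{1-\alpha}/(1-\alpha)$, giving again an $O(\tau^{2-\alpha})$ bound. Combining the two estimates, dividing by $\Gamma(1-\alpha)$, and absorbing all $\alpha$-dependent factors and $\|G''\|_{C[0,t_n]}$ into a single constant $\tilde{C}$ will yield \eqref{eq2.2}.

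The main obstacle is the boundary interval $[t_{n-1},t_n]$: the kernel $(t_n-s)^{-\alpha-1}$ arising from integration by parts fails to be integrable there, so one cannot use the same argument uniformly in $i$. The trick is to treat that interval separately with a coarser but kernel-compatible bound, and to observe that the sum over $i\le n-2$ gives a net $\tau^{-\alpha}$ scaling (rather than the divergent behavior one might fear from the individual integrals), so that the overall rate remains $\tau^{2-\alpha}$ rather than degrading.
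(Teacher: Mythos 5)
Your argument is correct: the identity $\sum_{i=0}^{n-1}\tau^{-1}[G(t_{i+1})-G(t_i)]\,b_{n-i-1}=\tau^{-1}\mathcal{D}_t^{(\alpha)}G(t_n)$ holds, the integration-by-parts step on the intervals $i\le n-2$ is legitimate because $\Phi$ vanishes at the nodes and the telescoped integral contributes $\tfrac{1}{\alpha}(\tau^{-\alpha}-t_n^{-\alpha})$, and the direct mean-value estimate on $[t_{n-1},t_n]$ gives the remaining $\tau^{2-\alpha}/(1-\alpha)$ term. The paper itself gives no proof but only cites Sun and Wu, and your proof is essentially the standard argument behind that cited result (piecewise-linear interpolation error plus separate treatment of the interval adjacent to the singularity), so nothing further is needed.
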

   \begin{lemma}\label{lemma2.3}
    \cite{wangxp} Let $f(x)\in C^5[x_{i-1},x_{i+1}]$ and $F(x):=f''(x)$, then we have 
    \begin{equation}\label{eq2.3}
      f(x_i)f'(x_i)=\psi(f_i,f_i)-\frac{h^2}{2}\psi(F_i,f_i)+O(h^4).
    \end{equation}
   \end{lemma}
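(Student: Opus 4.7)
The plan is to verify the identity by direct Taylor expansion about $x_i$, since $f \in C^5[x_{i-1},x_{i+1}]$ is enough regularity to control the $O(h^4)$ remainder. First I would expand $f_{i\pm 1} = f(x_i \pm h)$ through fifth order to obtain
\[
\Delta_x f_i = \frac{f_{i+1}-f_{i-1}}{2h} = f'(x_i) + \frac{h^2}{6}f'''(x_i) + O(h^4),
\]
and, applying the same expansion to the $C^5$ function $g(x):=f(x)^2$ (whose derivatives $g'=2ff'$, $g'''=6f'f''+2ff'''$ are already expressible in terms of $f$ and its derivatives),
\[
\Delta_x(f_i^2) = g'(x_i) + \frac{h^2}{6}g'''(x_i) + O(h^4) = 2f_if'_i + h^2 f'_i f''_i + \frac{h^2}{3}f_i f'''_i + O(h^4).
\]

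Substituting these two expansions into the definition $\psi(f_i,f_i)=\tfrac13[f_i\Delta_x f_i+\Delta_x(f_i^2)]$ and collecting terms gives
\[
\psi(f_i,f_i) = f_i f'_i + \frac{h^2}{3}f'_i f''_i + \frac{h^2}{6} f_i f'''_i + O(h^4).
\]
So the correction term $\tfrac{h^2}{2}\psi(F_i,f_i)$ must exactly match the $O(h^2)$ part. For this I only need leading-order accuracy, so I would expand once more to get $\Delta_x f_i = f'_i + O(h^2)$ and $\Delta_x(F_i f_i) = (f''f)'(x_i) + O(h^2) = f'''_i f_i + f''_i f'_i + O(h^2)$, whence
\[
\psi(F_i,f_i) = \frac{1}{3}\bigl(2 f'_i f''_i + f_i f'''_i\bigr) + O(h^2).
\]

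Multiplying by $-h^2/2$ and adding produces the cancellation
\[
\psi(f_i,f_i) - \frac{h^2}{2}\psi(F_i,f_i) = f_i f'_i + O(h^4),
\]
which is the claimed identity. The only real obstacle is the bookkeeping of the Taylor coefficients — I need to keep all terms up through $h^2$ in $\psi(f_i,f_i)$ (which requires fifth-order expansions of $f$ and $f^2$, hence the $C^5$ hypothesis) while only needing $O(1)$ accuracy for $\psi(F_i,f_i)$, and then verify that the two $O(h^2)$ contributions (the $f'_if''_i$ and $f_if'''_i$ terms) cancel with the correct coefficients $\tfrac13$ and $\tfrac16$ respectively. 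The remainder is controlled uniformly by the fifth derivative of $f$, giving a clean $O(h^4)$ bound.
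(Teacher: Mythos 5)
Your proposal is correct: the expansions $\Delta_x f_i = f_i'+\tfrac{h^2}{6}f_i'''+O(h^4)$, $\Delta_x(f_i^2)=2f_if_i'+h^2f_i'f_i''+\tfrac{h^2}{3}f_if_i'''+O(h^4)$ and $\psi(F_i,f_i)=\tfrac13(2f_i'f_i''+f_if_i''')+O(h^2)$ are all right, and the $h^2$ terms cancel exactly with the coefficient $\tfrac{h^2}{2}$, with the $C^5$ hypothesis controlling the remainders. Note that the paper itself gives no proof of this lemma --- it is imported verbatim from the cited reference \cite{wangxp} --- so your direct Taylor-expansion verification is precisely the standard argument underlying that cited result, and it makes the paper's use of the lemma self-contained.
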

   \begin{lemma}\label{lemma2.4}
    \cite{sunh} For any grid functions $u \in \Omega_h$ and $v \in \mathring{\Omega}_h$, then we get
    \begin{equation}\label{eq2.4}
      \langle \psi(u,v),v\rangle=0.
    \end{equation}
   \end{lemma}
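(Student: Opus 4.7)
The plan is to prove the discrete skew-symmetry identity by direct expansion and a telescoping-style reindexing, with the boundary values of $v$ killing the only surviving terms. I would proceed in the following steps.

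First I would unfold both levels of structure. By definition of the inner product and of $\psi$, write
\begin{equation*}
\langle \psi(u,v),v\rangle = \frac{h}{3}\sum_{i=1}^{M-1}\bigl[u_i\,\Delta_x v_i + \Delta_x(u_i v_i)\bigr] v_i,
\end{equation*}
and then substitute $\Delta_x w_i = \tfrac{1}{2h}(w_{i+1}-w_{i-1})$ to eliminate the $h$ in front. Setting $w_i := u_i v_i$ (which is legitimate since the formula is symbolic in $w$), the sum reduces to
\begin{equation*}
\frac{1}{6}\sum_{i=1}^{M-1}\bigl[w_i v_{i+1} - w_i v_{i-1} + w_{i+1}v_i - w_{i-1}v_i\bigr].
\end{equation*}

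Second, I would exploit the symmetric pairing $a_{i,j} := w_i v_j + w_j v_i$. Then the four-term sum above is exactly $\sum_{i=1}^{M-1} a_{i,i+1} - \sum_{i=1}^{M-1} a_{i,i-1}$. Shifting the index in the second sum via $j = i-1$ turns it into $\sum_{j=0}^{M-2} a_{j,j+1}$, so that almost everything telescopes and one is left with the single boundary contribution
\begin{equation*}
\frac{1}{6}\bigl(a_{M-1,M} - a_{0,1}\bigr) = \frac{1}{6}\bigl(w_{M-1}v_M + w_M v_{M-1} - w_0 v_1 - w_1 v_0\bigr).
\end{equation*}

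Third, since $v\in\mathring{\Omega}_h$ means $v_0 = v_M = 0$, we immediately have $w_0 = u_0 v_0 = 0$ and $w_M = u_M v_M = 0$, so every term in the residual vanishes and the identity $\langle\psi(u,v),v\rangle = 0$ is proved. The only real obstacle is bookkeeping: making the index-shift cleanly so that the ``skew-symmetric'' cancellation is transparent, and checking that the pairing $a_{i,j}$ really absorbs both the $u_i\,\Delta_x v_i$ piece and the $\Delta_x(u_i v_i)$ piece simultaneously. Note that no regularity, boundedness, or boundary condition on $u$ is required — this is why the lemma applies to an arbitrary $u\in\Omega_h$ and explains why $\psi$ was defined with the particular $\tfrac{1}{3}$ splitting rather than using either $u_i\,\Delta_x v_i$ or $\Delta_x(u_iv_i)$ alone.
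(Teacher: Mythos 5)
Your proof is correct: the expansion of $\langle\psi(u,v),v\rangle$ into $\frac{1}{6}\sum_{i=1}^{M-1}\bigl[w_iv_{i+1}-w_iv_{i-1}+w_{i+1}v_i-w_{i-1}v_i\bigr]$ with $w_i=u_iv_i$ is right, the symmetric pairing $a_{i,j}=w_iv_j+w_jv_i$ does telescope to $\frac{1}{6}(a_{M-1,M}-a_{0,1})$, and $v_0=v_M=0$ kills every surviving term, so no condition on $u$ beyond being a grid function on $\omega_h$ is needed. The paper itself offers no proof, simply citing Sun and Sun \cite{sunh}; your direct telescoping argument is the standard one (it is equivalent to a discrete summation by parts applied to the central difference $\Delta_x$, which is exactly why the $\tfrac13$ splitting of $\psi$ makes the form skew-symmetric in its second slot), so it legitimately fills the gap the citation leaves.
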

   \begin{lemma}\label{lemma2.5}
    \cite{wangxp} For any $w,u \in \mathring{\Omega}_h$ and $R\in{\Omega}_h$ satisfying 
    $$w_i=\delta_x^2 u_i-\frac{h^2}{12}\delta_x^2 w_i+R_i,\quad 1\leq i \leq M-1,$$
    we have
    \begin{equation}\label{eq2.5}
     \langle w,u \rangle=-|u|_1^2-\frac{h^2}{12}\|w\|^2+\frac{h^4}{144}|w|_1^2+\frac{h^2}{12}\langle R,w \rangle+\langle R,u\rangle,
    \end{equation}
    \begin{equation}\label{eq2.6}
     \langle w,u \rangle \leq-|u|_1^2-\frac{h^2}{18}\|w\|^2+\frac{h^2}{12}\langle R,w \rangle+\langle R,u \rangle.
    \end{equation}
   \end{lemma}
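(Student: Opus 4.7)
The plan is to take the inner product of the given relation with $u$, use the symmetry of the second difference operator (Lemma \ref{lem4.1}) to swap derivatives off $w$, and then eliminate $\delta_x^2 u$ by substituting back the same relation. For the inequality, I will compare the two quadratic-in-$w$ terms using the inverse inequality $|w|_1 \leq \frac{2}{h}\|w\|$ from Lemma \ref{lemma2.1}.

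More concretely, taking $\langle \cdot, u\rangle$ of both sides of
$$w_i=\delta_x^2 u_i-\tfrac{h^2}{12}\delta_x^2 w_i+R_i$$
yields
$$\langle w,u\rangle=\langle \delta_x^2 u,u\rangle-\tfrac{h^2}{12}\langle \delta_x^2 w,u\rangle+\langle R,u\rangle.$$
By Lemma \ref{lem4.1}, $\langle \delta_x^2 u,u\rangle=-|u|_1^2$ and $\langle \delta_x^2 w,u\rangle=\langle w,\delta_x^2 u\rangle$. Now the key maneuver: use the hypothesis again, rewritten as $\delta_x^2 u_i=w_i+\tfrac{h^2}{12}\delta_x^2 w_i-R_i$, and substitute to obtain
$$\langle w,\delta_x^2 u\rangle=\|w\|^2+\tfrac{h^2}{12}\langle w,\delta_x^2 w\rangle-\langle w,R\rangle=\|w\|^2-\tfrac{h^2}{12}|w|_1^2-\langle R,w\rangle,$$
where I applied Lemma \ref{lem4.1} once more to get $\langle w,\delta_x^2 w\rangle=-|w|_1^2$. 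Plugging this back and collecting terms produces the equality (\ref{eq2.5}) exactly.

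For the inequality (\ref{eq2.6}), it suffices to verify
$$-\tfrac{h^2}{12}\|w\|^2+\tfrac{h^4}{144}|w|_1^2 \;\le\; -\tfrac{h^2}{18}\|w\|^2,$$
i.e., $\tfrac{h^4}{144}|w|_1^2 \le \bigl(\tfrac{h^2}{12}-\tfrac{h^2}{18}\bigr)\|w\|^2=\tfrac{h^2}{36}\|w\|^2$, which reduces to $|w|_1^2\le\tfrac{4}{h^2}\|w\|^2$. This is precisely the fourth estimate in Lemma \ref{lemma2.1}.

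I do not anticipate a genuine obstacle here; the statement is essentially a bookkeeping identity coupled with one inverse-type bound. The only step that requires a small bit of cleverness is recognizing that, after one application of summation by parts, one should feed the original hypothesis back in (rather than leaving $\delta_x^2 u$ intact) so that the answer expresses itself purely in terms of $|u|_1$, norms of $w$, and pairings with $R$. The inequality then follows automatically from the inverse inequality already proved in the preliminaries.
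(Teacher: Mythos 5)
Your argument is correct: taking the inner product with $u$, applying Lemma \ref{lem4.1}, feeding the defining relation back in to eliminate $\delta_x^2 u$, and then invoking the inverse estimate $|w|_1 \leq \frac{2}{h}\|w\|$ yields (\ref{eq2.5}) and (\ref{eq2.6}) exactly as stated. The paper itself only cites this lemma from \cite{wangxp} without reproducing a proof, and your derivation is the standard one used there, so nothing further is needed.
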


   \begin{lemma}\label{lemmma2.6}
    \cite{wangxp} For any $u,w,R$ defined on $\omega_{h\tau}$ satisfying 
    \begin{equation*}
       \begin{cases}
        w_i^n=\delta_x^2 u_i^n-\frac{h^2}{12}\delta_x^2 w_i^n+R_i^n,\quad 1\leq i \leq M-1,\quad 0\leq n\leq N,\\
        u_0^n=u_M^n=w_0^n=w_M^n=0,\quad 0\leq n\leq N,
      \end{cases}  
    \end{equation*}   
      we can obtain
      \begin{equation*}
        \begin{split}
          \langle w^{n-\frac{1}{2}},\delta_t u^{n-\frac{1}{2}} \rangle=&-\frac{1}{2\tau}\left[(|u^n|_1^2-|u^{n-1}|_1^2)+\frac{h^2}{12}(\|w^n\|^2-\|w^{n-1}\|^2)-\frac{h^4}{144}(|w^n|_1^2-|w^{n-1}|_1^2)\right]\\
          &+\frac{h^2}{12}\langle w^{n-\frac{1}{2}},\delta_t R^{n-\frac{1}{2}}\rangle+\langle R^{n-\frac{1}{2}},\delta_t u^{n-\frac{1}{2}}\rangle, \quad 1 \leq n \leq N.
        \end{split}
      \end{equation*}
   \end{lemma}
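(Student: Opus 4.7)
The plan is to derive a time-discretized version of Lemma \ref{lemma2.5}. First, averaging the defining relation at time levels $n$ and $n-1$ yields the half-level identity
$$w_i^{n-\frac{1}{2}} = \delta_x^2 u_i^{n-\frac{1}{2}} - \frac{h^2}{12}\delta_x^2 w_i^{n-\frac{1}{2}} + R_i^{n-\frac{1}{2}}, \qquad 1 \leq i \leq M-1.$$
Taking the discrete inner product of both sides with $\delta_t u^{n-\frac{1}{2}}$, which belongs to $\mathring{\Omega}_h$ since $u^n$ and $u^{n-1}$ both vanish at $i=0,M$, produces $\langle w^{n-\frac{1}{2}}, \delta_t u^{n-\frac{1}{2}}\rangle = T_1 + T_2 + T_3$, where
$$T_1 := \langle \delta_x^2 u^{n-\frac{1}{2}}, \delta_t u^{n-\frac{1}{2}}\rangle, \quad T_2 := -\tfrac{h^2}{12}\langle \delta_x^2 w^{n-\frac{1}{2}}, \delta_t u^{n-\frac{1}{2}}\rangle, \quad T_3 := \langle R^{n-\frac{1}{2}}, \delta_t u^{n-\frac{1}{2}}\rangle.$$
Since $T_3$ is already in its final form, the work is concentrated in $T_1$ and $T_2$.

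For $T_1$, the summation-by-parts identity of Lemma \ref{lem4.1} gives $T_1 = -\langle \delta_x u^{n-\frac{1}{2}}, \delta_x \delta_t u^{n-\frac{1}{2}}\rangle$. Writing the two factors as $\tfrac{1}{2}(\delta_x u^n + \delta_x u^{n-1})$ and $\tfrac{1}{\tau}(\delta_x u^n - \delta_x u^{n-1})$ and using the pointwise difference-of-squares identity telescopes the pairing into $T_1 = -\frac{1}{2\tau}(|u^n|_1^2 - |u^{n-1}|_1^2)$.

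For $T_2$, I will first swap $\delta_x^2$ back onto $w^{n-\frac{1}{2}}$ via Lemma \ref{lem4.1} to write $\langle \delta_x^2 w^{n-\frac{1}{2}}, \delta_t u^{n-\frac{1}{2}}\rangle = \langle w^{n-\frac{1}{2}}, \delta_x^2 \delta_t u^{n-\frac{1}{2}}\rangle$. The critical step is then to eliminate $\delta_x^2 \delta_t u^{n-\frac{1}{2}}$ using the time-difference of the hypothesis: subtracting the level-$(n-1)$ relation from the level-$n$ one and dividing by $\tau$ yields $\delta_x^2 \delta_t u^{n-\frac{1}{2}} = \delta_t w^{n-\frac{1}{2}} + \tfrac{h^2}{12}\delta_x^2 \delta_t w^{n-\frac{1}{2}} - \delta_t R^{n-\frac{1}{2}}$. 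Substituting, applying Lemma \ref{lem4.1} once more to the $\delta_x^2 \delta_t w^{n-\frac{1}{2}}$ term, and reusing the telescoping trick from $T_1$ converts the surviving pairings into $\frac{1}{2\tau}(\|w^n\|^2 - \|w^{n-1}\|^2)$ and $-\frac{1}{2\tau}(|w^n|_1^2 - |w^{n-1}|_1^2)$, leaving the residual $\tfrac{h^2}{12}\langle w^{n-\frac{1}{2}}, \delta_t R^{n-\frac{1}{2}}\rangle$ behind.

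The main obstacle is careful bookkeeping: the $\frac{h^4}{144}$ coefficient in the target identity only emerges after the compact correction factor $\frac{h^2}{12}$ is iterated once (giving $\tfrac{h^2}{12}\cdot\tfrac{h^2}{12}$), and each use of $\langle w, \delta_x^2 v\rangle = -\langle \delta_x w, \delta_x v\rangle$ introduces a sign flip that must be tracked through the two successive summations by parts. Collecting $T_1 + T_2 + T_3$ and factoring $-\frac{1}{2\tau}$ out of the three telescoping time-difference terms then reproduces exactly the bracketed expression on the right-hand side of the claimed identity, completing the proof.
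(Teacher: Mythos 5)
Your proposal is correct: averaging the defining relation at levels $n$ and $n-1$, pairing with $\delta_t u^{n-\frac{1}{2}}$, applying Lemma \ref{lem4.1} to $T_1$ and to the term $\langle \delta_x^2 w^{n-\frac{1}{2}},\delta_t u^{n-\frac{1}{2}}\rangle$, and then eliminating $\delta_x^2\delta_t u^{n-\frac{1}{2}}$ via the time-differenced hypothesis reproduces the stated identity with all signs and the iterated $\frac{h^2}{12}\cdot\frac{h^2}{12}=\frac{h^4}{144}$ coefficient coming out correctly. The paper itself gives no proof of this lemma (it is quoted from the reference by Wang, Zhang and Sun), and your argument is exactly the standard one behind it — the time-averaged counterpart of the manipulation underlying Lemma \ref{lemma2.5}; the only point worth stating explicitly is that the boundary conditions $w_0^n=w_M^n=0$ (not just those for $u$) are what license the second application of Lemma \ref{lem4.1} when you transfer $\delta_x^2$ off $w^{n-\frac{1}{2}}$.
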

   \begin{remark}
    In this paper, the notation $\tilde{C}$ denotes a generic constant with different values in different situations, but it is independent of the spatial step $h$ and temporal step $\tau$.  
   \end{remark}
 \section{Derivation of the compact difference scheme}
 Let $w:=u_{xx}, v:=u_{t}$, then the problem (\ref*{eq1.1})-(\ref*{eq1.3}) is equivalent to
\begin{align}
    \label{eq3.1}  &\mu_1\mathbb{D}_{t}^{(\alpha+1)}u+\mu_2\mathbb{D}_{t}^{(\alpha)}u+uu_x=\lambda w(x,t), \quad  (x,t)\in (0,L)\times (0,T],\quad \alpha\in (0, 1),\\
    \label{eq3.2}  &w(x,t)=u_{xx}(x,t),\qquad (x,t)\in (0,L)\times (0,T],\\
    \label{eq3.3}  &u(x,0)=\varphi_1 (x),\quad v(x,0)=\varphi_2 (x), \qquad x \in [0,L],\\
    \label{eq3.4}  &u(0,t)=u(L,t)=0, \quad t \in [0, T].
\end{align}
  According to (\ref*{eq3.1}) and (\ref*{eq3.4}), we can easily get 
  \begin{equation}\label{eqb3.5}
     w(0,t)=w(L,t)=0, \quad t \in [0, T]. 
  \end{equation}
  Let $U:=\{U_i^n|0 \leq i \leq M,0 \leq n \leq N\}$ and $W:=\{W_i^n|0 \leq i \leq M,0 \leq n \leq N\}$ denote the grid functions defined on $\omega_{h\tau}$, where $U_i^n:=u(x_i,t_n)$ and $W_i^n:=w(x_i,t_n)$.
  \vskip 0.2mm
  Considering (\ref*{eq3.1}) at the point $(x_i,t_{n-\frac{1}{2}})$ and using Lemma \ref*{lemma2.2} and Lemma \ref*{lemma2.3}, we have 
  \begin{equation}\label{eqn3.5}
    \begin{split}
      &\frac{\tau^{-1}}{\Gamma(1-\alpha)}\mathcal{D}_t^{(\alpha)}( \mu_1 \delta_t U_i^{n-\frac{1}{2}}+\mu_2  U_i^{n-\frac{1}{2}})+\psi(U_i^{n-\frac{1}{2}},U_i^{n-\frac{1}{2}})-\frac{h^2}{2}\psi(W_i^{n-\frac{1}{2}},U_i^{n-\frac{1}{2}})\\
      &=\lambda W_i^{n-\frac{1}{2}}+P_i^{n-\frac{1}{2}},\quad 1\leq i \leq M-1,\quad 1\leq n\leq N,
    \end{split}
  \end{equation}
  from which we can get the following estimate for truncation error 
  \begin{equation}\label{eqn3.6}
    \left| P_i^{n-\frac{1}{2}}\right| \leq \tilde{C}(\tau^{2-\alpha}+h^4),\quad 1\leq i \leq M-1,\quad 1\leq n\leq N.
  \end{equation}
  \vskip 0.2mm
  Meanwhile, considering (\ref*{eq3.2}) at the point $(x_i,t_{n})$ and using Taylor expansion, it's easy to get
  \begin{equation}\label{eqn3.7}
    W_i^n=\delta_x^2 U_i^n-\frac{h^2}{12}\delta_x^2 W_i^n+Q_i^n,\quad 1\leq i \leq M-1,\quad 0\leq n\leq N,
  \end{equation}
from which $Q_i^n$ and $\delta_t Q_i^{n-\frac{1}{2}}$ are estimated by
\begin{align}
  \label{eqn3.8} \left| Q_i^{n}\right| \leq \tilde{C}h^4, \quad 1\leq i \leq M-1,\quad 0\leq n\leq N,\\
  \label{eqn3.9} \left| \delta_t Q_i^{n-\frac{1}{2}}\right| \leq \tilde{C}h^4, \quad 1\leq i \leq M-1,\quad 1\leq n\leq N.
\end{align}
\vskip 0.2mm
Omitting the truncation errors $P_i^{n-\frac{1}{2}}$ and $Q_i^n$ in (\ref*{eqn3.5}) and (\ref*{eqn3.7}), replacing the functions $U_i^n,V_i^n,W_i^n$ with their numerical approximation $u_i^n,v_i^n,w_i^n$, respectively, and combining with (\ref*{eq3.3})-(\ref*{eqb3.5}), we construct the following compact difference scheme
\begin{equation}\label{eqn3.10}
  \begin{split}
    &\frac{\tau^{-1}}{\Gamma(1-\alpha)}\mathcal{D}_t^{(\alpha)}( \mu_1 \delta_t u_i^{n-\frac{1}{2}}+\mu_2  u_i^{n-\frac{1}{2}})+\psi(u_i^{n-\frac{1}{2}},u_i^{n-\frac{1}{2}})-\frac{h^2}{2}\psi(w_i^{n-\frac{1}{2}},u_i^{n-\frac{1}{2}})\\
    &-\lambda w_i^{n-\frac{1}{2}}=0,\quad 1\leq i \leq M-1,\quad 1\leq n\leq N,
  \end{split}
\end{equation}
\begin{align}
  \label{eqn3.11} &w_i^n=\delta_x^2 u_i^n-\frac{h^2}{12}\delta_x^2 w_i^n,\quad 1\leq i \leq M-1,\quad 0\leq n\leq N,\\
  \label{eqn3.12} &u_i^0=\varphi_1 (x_i),\quad v_i^0=\varphi_2 (x_i), \quad 0\leq i \leq M,\\
  \label{eqn3.13} &u_0^n=u_M^n=w_0^n=w_M^n=0,\quad 0\leq n\leq N.
\end{align}

\section{Analysis of convergence and stability}
   \vskip 0.2mm
   In this section, the convergence and stability of the compact difference scheme (\ref*{eqn3.10})-(\ref*{eqn3.13}) will be derived. First, we introduce some lemmas as follows.
   \begin{lemma}\label{lem4.2}
    \cite{qsx3,sunzz} For any $\tilde{G}=\{g_1,g_2,g_3,\cdots\}$ and $\hat{q}$, we have the following estimate
    \begin{equation*}
      \begin{split}
        &\sum\limits_{n=1}^{N}\left[ b_0g_n-\sum\limits_{i=1}^{n-1}(b_{n-i-1}-b_{n-i})g_i-b_{n-1}\hat{q}\right]g_n\\
        & \geq \frac{T^{-\alpha}}{2}\tau\sum\limits_{n=1}^{N}(g_n)^2-\frac{T^{1-\alpha}}{2(1-\alpha)}\hat{q}^2, \qquad 0<\alpha <1, \quad N=1,2,3,\cdots
      \end{split}
    \end{equation*}
   where $b_i(i \geq 0)$ defined in (\ref*{eq2.2}).
    \end{lemma}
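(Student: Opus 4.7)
The plan is to recast the left-hand side as a single quadratic form by treating $\hat q$ as a ``zeroth value'' of the sequence, and then to extract the two estimates by Abel summation together with the monotonicity of $\{b_k\}$. Concretely, I would set $g_0 := \hat q$. A brief rearrangement that collects the coefficient of each $g_i$ yields
\[
b_0 g_n - \sum_{i=1}^{n-1}(b_{n-i-1}-b_{n-i})g_i - b_{n-1}\hat q \;=\; \sum_{j=1}^{n} b_{n-j}\,(g_j - g_{j-1}),
\]
so that the left-hand side of the claim becomes $S := \sum_{n=1}^{N} g_n \sum_{j=1}^{n} b_{n-j}(g_j-g_{j-1})$.

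I would then apply the algebraic identity
\[
2 g_n (g_j - g_{j-1}) \;=\; (g_j^2 - g_{j-1}^2) + (g_n - g_{j-1})^2 - (g_n - g_j)^2,
\]
which follows from $2ab = a^2+b^2-(a-b)^2$ applied twice. Summing $b_{n-j}$ times this identity over $j = 1,\ldots,n$ for fixed $n$, the first piece is handled by Abel summation and produces $b_0 g_n^2 - b_{n-1}\hat q^2 - \sum_{j=1}^{n-1}(b_{n-j-1}-b_{n-j}) g_j^2$; the remainder in $(g_n - g_{j-1})^2 - (g_n - g_j)^2$, treated by a parallel Abel summation, assembles into $b_{n-1}(g_n - \hat q)^2 + \sum_{j=1}^{n-1}(b_{n-j-1}-b_{n-j})(g_n - g_j)^2$, which is non-negative because the sequence $b_k = \int_{t_k}^{t_{k+1}} s^{-\alpha}\,ds$ is strictly decreasing. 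Discarding this non-negative remainder and summing over $n$, the coefficient of each $g_j^2$ telescopes to $b_{N-j}$, giving
\[
2S \;\geq\; \sum_{j=1}^{N} b_{N-j}\, g_j^2 \;-\; \hat q^{\,2} \sum_{n=1}^{N} b_{n-1}.
\]

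I would close with two direct estimates on the integrals defining $b_k$: since $s^{-\alpha}$ is decreasing and $s \leq t_N = T$ on every relevant subinterval, $b_{N-j} \geq \tau\,T^{-\alpha}$ for all $1 \leq j \leq N$; and the collapsing identity $\sum_{n=1}^{N} b_{n-1} = \int_0^T s^{-\alpha}\,ds = T^{1-\alpha}/(1-\alpha)$ holds by definition of the $b_k$. Inserting these two estimates into the last display and dividing by $2$ yields the lemma. The only delicate step is recognizing, in the second paragraph, that the monotonicity $b_0 > b_1 > \cdots > 0$ is precisely what makes the Abel-summed remainder of the squared-increment terms non-negative; once that sign is identified, the rest of the argument is purely telescoping bookkeeping and elementary estimates on the kernel.
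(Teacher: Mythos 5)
Your proposal is correct. Note first that the paper itself gives no proof of this lemma; it is quoted from the cited references, where the standard argument runs differently from yours: one keeps the operator in its original form, bounds each cross term by $g_ig_n\le\tfrac12(g_i^2+g_n^2)$ and $\hat q\,g_n\le\tfrac12(\hat q^2+g_n^2)$ (legitimate because the weights $b_{n-i-1}-b_{n-i}$ and $b_{n-1}$ are positive), uses $\sum_{i=1}^{n-1}(b_{n-i-1}-b_{n-i})=b_0-b_{n-1}$ to see that the coefficient of $g_n^2$ is $\tfrac12 b_0$, and then telescopes over $n$ to reach exactly your intermediate bound $2S\ge\sum_{j=1}^N b_{N-j}g_j^2-\hat q^2\sum_{n=1}^N b_{n-1}$, finishing with the same kernel estimates $b_{N-j}\ge\tau T^{-\alpha}$ and $\sum_{n=1}^N b_{n-1}\le T^{1-\alpha}/(1-\alpha)$. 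Your route instead rewrites the L1 operator as the convolution $\sum_{j=1}^n b_{n-j}(g_j-g_{j-1})$ with $g_0:=\hat q$ and applies the exact identity $2g_n(g_j-g_{j-1})=(g_j^2-g_{j-1}^2)+(g_n-g_{j-1})^2-(g_n-g_j)^2$, so that the part discarded is an explicitly identified non-negative remainder $b_{n-1}(g_n-\hat q)^2+\sum_{j=1}^{n-1}(b_{n-j-1}-b_{n-j})(g_n-g_j)^2$ rather than the slack of a term-by-term Cauchy--Schwarz step; both approaches hinge on the same two facts, monotonicity $b_0>b_1>\cdots>0$ and the integral estimates for $b_k$, and they are of comparable length, but yours has the mild advantage of being an exact energy decomposition (sharper bookkeeping of what is thrown away), while the cited proof is slightly more elementary in that it never needs the difference reformulation. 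One cosmetic point: in the lemma $N$ is arbitrary, so $t_N=N\tau$ need not equal $T$; you should write $\sum_{n=1}^N b_{n-1}=t_N^{1-\alpha}/(1-\alpha)\le T^{1-\alpha}/(1-\alpha)$ and $b_{N-j}\ge\tau t_N^{-\alpha}\ge\tau T^{-\alpha}$, which still point in the right direction and leave your conclusion unchanged.
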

    \begin{lemma}\label{lem4.3}
      For any grid functions $u \in \Omega_h$ and $v \in \mathring{\Omega}_h$, then we have 
      \begin{equation}\label{eqn4.2}
        \langle \psi(u^{n-\frac{1}{2}},v^{n-\frac{1}{2}}),\delta_t v^{n-\frac{1}{2}}\rangle=\langle \psi(u^{n-\frac{1}{2}},v^{n-1}),\delta_t v^{n-\frac{1}{2}}\rangle, \quad 1 \leq n \leq N.
      \end{equation}
     \end{lemma}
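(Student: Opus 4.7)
The plan is to rewrite the difference between the two sides using bilinearity of $\psi$ and then reduce the resulting expression to an instance of Lemma \ref{lemma2.4}. The key algebraic observation is that
\[
v_i^{n-\frac{1}{2}} - v_i^{n-1} = \tfrac{1}{2}(v_i^n - v_i^{n-1}) = \tfrac{\tau}{2}\,\delta_t v_i^{n-\frac{1}{2}},
\]
so subtracting the two sides of \eqref{eqn4.2} isolates exactly the difference $v^{n-\frac{1}{2}} - v^{n-1}$ in the second slot of $\psi$.

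First I would apply the bilinearity of $\psi$ (noted in the remark following its definition) to write
\[
\psi(u^{n-\frac{1}{2}}, v^{n-\frac{1}{2}}) - \psi(u^{n-\frac{1}{2}}, v^{n-1}) = \psi\!\left(u^{n-\frac{1}{2}},\, v^{n-\frac{1}{2}} - v^{n-1}\right) = \tfrac{\tau}{2}\,\psi\!\left(u^{n-\frac{1}{2}}, \delta_t v^{n-\frac{1}{2}}\right).
\]
Taking the discrete inner product of both sides against $\delta_t v^{n-\frac{1}{2}}$ reduces the claim \eqref{eqn4.2} to showing
\[
\left\langle \psi\!\left(u^{n-\frac{1}{2}}, \delta_t v^{n-\frac{1}{2}}\right),\, \delta_t v^{n-\frac{1}{2}} \right\rangle = 0.
\]

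Next I would verify the membership requirement needed to invoke Lemma \ref{lemma2.4}. Since $v^n, v^{n-1} \in \mathring{\Omega}_h$, both vanish at $i=0$ and $i=M$, hence $\delta_t v^{n-\frac{1}{2}} = \tau^{-1}(v^n - v^{n-1})$ also belongs to $\mathring{\Omega}_h$. Meanwhile $u^{n-\frac{1}{2}} \in \Omega_h$ (no boundary condition required). Therefore Lemma \ref{lemma2.4}, applied with the choice $u \leftarrow u^{n-\frac{1}{2}}$ and $v \leftarrow \delta_t v^{n-\frac{1}{2}}$, gives precisely the vanishing displayed above, completing the argument.

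There is no real obstacle in this proof; the only thing to be careful about is the zero-boundary-condition bookkeeping so that Lemma \ref{lemma2.4} legitimately applies to $\delta_t v^{n-\frac{1}{2}}$. The rest is a one-line bilinearity manipulation.
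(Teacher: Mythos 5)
Your proof is correct and is essentially the same argument as the paper's: the paper writes $v^{n-\frac{1}{2}}=v^{n-1}+\frac{\tau}{2}\delta_t v^{n-\frac{1}{2}}$, expands by bilinearity of $\psi$, and kills the extra term $\frac{\tau}{2}\langle \psi(u^{n-\frac{1}{2}},\delta_t v^{n-\frac{1}{2}}),\delta_t v^{n-\frac{1}{2}}\rangle$ with Lemma \ref{lemma2.4}, exactly as you do (you merely phrase it as subtracting the two sides rather than expanding the left-hand side). Your explicit check that $\delta_t v^{n-\frac{1}{2}}\in\mathring{\Omega}_h$ is a point the paper leaves implicit, and it is a welcome addition.
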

     \begin{proof}
      Using Lemma $\ref*{lemma2.4}$, we can arrive at
      \begin{equation*}
        \begin{split}
          &\left\langle \psi(u^{n-\frac{1}{2}},v^{n-\frac{1}{2}}),\delta_t v^{n-\frac{1}{2}}\right\rangle\\
          =&\left\langle \psi(u^{n-\frac{1}{2}},v^{n-1}+\frac{\tau}{2}\delta_t v^{n-\frac{1}{2}}),\delta_t v^{n-\frac{1}{2}}\right\rangle\\
          =&\left\langle \psi(u^{n-\frac{1}{2}},v^{n-1}),\delta_t v^{n-\frac{1}{2}}\right\rangle+\frac{\tau}{2}\left\langle \psi(u^{n-\frac{1}{2}},\delta_t v^{n-\frac{1}{2}}),\delta_t v^{n-\frac{1}{2}}\right\rangle\\
          =&\left\langle \psi(u^{n-\frac{1}{2}},v^{n-1}),\delta_t v^{n-\frac{1}{2}}\right\rangle.
        \end{split}
      \end{equation*}
     \end{proof}
    \begin{lemma}\label{lem4.4}
      \cite{gi} (Discrete Gr$\ddot{o}$nwall's inequality) If $a_n$ is a non-negative real sequence and satisfies 
      $$a_n\leq b_n+\sum\limits_{i=0}^{n-1}d_i a_i,\quad n\geq 1,$$
      where $b_n$ is a non-descending and non-negative sequence, $d_n \geq 0$, then we arrive at
      $$a_n\leq b_n\exp \left( \sum\limits_{i=0}^{n-1}d_i\right) ,\quad n\geq 1.$$
    \end{lemma}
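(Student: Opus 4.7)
The plan is to prove the bound by discrete variation-of-parameters combined with induction on $n$. First I would set $S_n := \sum_{i=0}^{n-1} d_i a_i$, so that the hypothesis reads $a_n \leq b_n + S_n$. Taking the forward difference of $S$ and substituting the hypothesis produces the one-step linear inequality
\[
S_{n+1} - S_n = d_n a_n \leq d_n(b_n + S_n),
\qquad \text{i.e.,} \qquad
S_{n+1} \leq (1 + d_n) S_n + d_n b_n,
\]
which converts the original implicit estimate into an explicit first-order recurrence that can be unrolled.

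Next I would iterate this recurrence from the base value $S_1 = d_0 a_0$ (invoking the customary accompanying assumption $a_0 \leq b_0$) to obtain the closed-form bound
\[
S_n \leq \sum_{j=0}^{n-1} d_j b_j \prod_{k=j+1}^{n-1}(1 + d_k).
\]
Since $\{b_n\}$ is non-descending, $b_j \leq b_n$ for every $j \leq n-1$, so I can factor $b_n$ out of the sum. The remaining combinatorial sum collapses via the elementary identity
\[
\sum_{j=0}^{n-1} d_j \prod_{k=j+1}^{n-1}(1 + d_k) \;=\; \prod_{k=0}^{n-1}(1 + d_k) - 1,
\]
which is verified by a short induction on $n$ (or by expanding the product directly). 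Combining these steps gives $a_n \leq b_n + S_n \leq b_n \prod_{k=0}^{n-1}(1 + d_k)$.

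Finally, applying the elementary inequality $1 + x \leq e^x$, valid for $x \geq 0$ (which holds since $d_n \geq 0$ by hypothesis), termwise in the product converts it into the desired exponential bound $\prod_{k=0}^{n-1}(1+d_k) \leq \exp\bigl(\sum_{k=0}^{n-1} d_k\bigr)$, finishing the argument. The main obstacle I anticipate is the careful index bookkeeping during the unrolling and the monotonicity-based factoring of $b_n$, since that monotonicity is exactly what allows a single $b_n$ to absorb every $b_j$; the telescoping identity itself and the final exponential estimate are both routine.
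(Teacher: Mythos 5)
Your argument is correct, and there is nothing in the paper to compare it against: Lemma~4.4 is quoted from the reference of Sloan and Thom\'ee and the authors give no proof, so your write-up is a self-contained justification of a cited result. The route you take (setting $S_n=\sum_{i=0}^{n-1}d_ia_i$, converting the implicit bound into the one-step recurrence $S_{n+1}\le(1+d_n)S_n+d_nb_n$, unrolling, telescoping via $d_j\prod_{k=j+1}^{n-1}(1+d_k)=\prod_{k=j}^{n-1}(1+d_k)-\prod_{k=j+1}^{n-1}(1+d_k)$, factoring out $b_n$ by monotonicity, and finishing with $1+x\le e^x$) is the standard proof, and all the index bookkeeping you worried about checks out. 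One point you flag deserves emphasis: the extra assumption $a_0\le b_0$ (equivalently, requiring the hypothesis also at $n=0$ with the empty-sum convention) is genuinely needed, not just customary -- as stated, the lemma fails for, say, $a_0$ arbitrarily large with $b_n\equiv 1$, $d_0=1$, $a_1=1+a_0$. So the paper's statement is slightly incomplete; your proof is correct under the repaired hypothesis, which is harmless here because in both applications in Section~4 one has $b_n\ge 3H^0\ge a_0$ (and likewise $b_n\ge 3\tilde H^0$ plus the $\gamma^0$ estimate in the stability proof), so the initial term is absorbed exactly as your base step $S_1=d_0a_0\le d_0b_0$ requires.
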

    \subsection{Convergence}
    The convergence of compact difference scheme (\ref*{eqn3.10})-(\ref*{eqn3.13}) will be analysed in the following. Firstly we give some error notations and constants as follows
    \begin{equation*}
      \begin{array}{lll}
        e_i^n:=U_i^n-u_i^n, \quad \sigma_i^n:=V_i^n-v_i^n,\quad \rho_i^n :=W_i^n-w_i^n,\\
        \\
        c_0:=\max \limits_{(x,t)\in [0,L]\times[0,T]}\left\{ |u(x,t)|,|u_{x}(x,t)|,|u_{xx}(x,t)|,|u_{xxx}(x,t)|\right\},\\
        \\
        c_1:=\frac{T^{-\alpha}}{2\Gamma(1-\alpha)}, \quad c_2:=\frac{1}{c_1\mu_1^2}, \quad c_3:=c_2\mu_1(c_0+\frac{c_0L}{\sqrt{6}}+\frac{\sqrt{L}}{2}), \quad c_4:=c_2\mu_2c_0(L^2+L),\\
        \\
        c_5:=c_2\mu_1c_0(2+L), \quad c_6:=c_2\mu_1(2c_0+\sqrt{L}), \quad c_7:=c_2\mu_2c_0L,\\
        \\
        c_8:=\max\left\{ \frac{2c_3^2 +c_4+3c_5^2+\frac{5\mu_2^2L^2}{6\mu_1^2}+L^2+1}{c_2\mu_1\lambda} ,\quad \frac{15c_6^2+3c_7^2+6}{c_2\mu_1\lambda} +\frac{2}{3} ,\quad 1 \right\}  ,\\
        \\
        c_9:=\left( \frac{2c_2\mu_1}{\lambda}+\frac{c_2\mu_2^2}{2\mu_1\lambda}+\frac{c_2\lambda_2}{\mu_1}(2\mu_1^2+\mu_2^2 )+c_2\mu_1\lambda \right)\tilde{C}^2L,\quad c_{10}:=e^{6c_8T}\sqrt{3\tilde{C}+6Tc_9}.
      \end{array}
    \end{equation*}
    \vskip 0.2mm
    Substracting (\ref*{eqn3.10}) and (\ref*{eqn3.11}) from (\ref*{eqn3.5}) and (\ref*{eqn3.7}) respectively, then we can get error equations as follows
    \begin{equation}\label{eqn4.3}
      \begin{split}
        &\frac{\tau^{-1}}{\Gamma(1-\alpha)}\mathcal{D}_t^{(\alpha)}( \mu_1 \delta_t e_i^{n-\frac{1}{2}}+\mu_2  e_i^{n-\frac{1}{2}})+\left[\psi(U_i^{n-\frac{1}{2}},U_i^{n-\frac{1}{2}})-\psi(u_i^{n-\frac{1}{2}},u_i^{n-\frac{1}{2}})\right]\\
        &-\frac{h^2}{2}\left[\psi(W_i^{n-\frac{1}{2}},U_i^{n-\frac{1}{2}})-\psi(w_i^{n-\frac{1}{2}},u_i^{n-\frac{1}{2}})\right]-\lambda \rho_i^{n-\frac{1}{2}}=P_i^{n-\frac{1}{2}},\\
        &\qquad \qquad 1\leq i \leq M-1,\quad 1\leq n\leq N,
      \end{split}
    \end{equation}
    \begin{align}
      \label{eqn4,4} &\rho_i^n=\delta_x^2 e_i^n-\frac{h^2}{12}\delta_x^2 \rho_i^n+Q_i^n,\quad 1\leq i \leq M-1,\quad 0\leq n\leq N,\\
      \label{eqn4,5} &e_i^0=0,\quad \sigma_i^0=0, \quad 0\leq i \leq M,\\
      \label{eqn4,6} &e_0^n=e_M^n=\rho_0^n=\rho_M^n=0,\quad 0\leq n\leq N.
    \end{align}
    \begin{theorem}\label{Th4.1}
      Assume that the problem (\ref*{eq3.1})-(\ref*{eq3.4}) has  solutions $u(x,t),w(x,t)$, and $\{u_i^n,w_i^n|0\leq i \leq M, 0\leq n\leq N\}$ are the solutions of the compact difference scheme (\ref*{eqn3.10})-(\ref*{eqn3.13}). If $\tau$ and $h$ satisfy $\tau^{2-\alpha} +h^4 \leq 1/c_{10}$ and $c_8\tau \leq 1/3$, then we have
      \begin{equation}\label{eqn4.7}
        |e^n|_1 \leq c_{10}(\tau^{2-\alpha}+h^4),\quad 0\leq n\leq N.
      \end{equation}
    \end{theorem}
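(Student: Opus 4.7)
The plan is to proceed by induction on $n$: assume that $|e^j|_1 \leq c_{10}(\tau^{2-\alpha}+h^4)$ holds for every $0 \leq j \leq n-1$, and deduce the same bound at level $n$. The case $n=0$ is immediate from (4.5). The hypothesis $\tau^{2-\alpha}+h^4 \leq 1/c_{10}$ then forces $|e^j|_1 \leq 1$ for such $j$, so Lemma \ref{lemma2.1} yields $\|e^j\|_\infty \leq \sqrt{L}/2$ and hence $\|u^j\|_\infty \leq c_0+\sqrt{L}/2$; the discrete Poisson relation (4.4) together with Lemma \ref{lemma2.1} supplies the matching control of $\rho^j$ and $w^j$. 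These a priori bounds are the only mechanism that can tame the quadratic convection nonlinearity, so the induction is essential.

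For the inductive step I would take the discrete inner product of (4.3) with the natural test function $g^{n-\frac{1}{2}} := \mu_1\delta_t e^{n-\frac{1}{2}}+\mu_2 e^{n-\frac{1}{2}}$, multiply by $\tau$, and sum over $n = 1, 2, \ldots, m$ for an arbitrary $1 \leq m \leq N$. Lemma \ref{lem4.2} applied to the fractional-derivative contribution (with $\hat q = 0$, since $e^0 = 0$) produces a nonnegative quantity proportional to $c_1\tau \sum_{n=1}^{m}\|g^{n-\frac{1}{2}}\|^2$. For the diffusion contribution $-\lambda\langle\rho^{n-\frac{1}{2}},g^{n-\frac{1}{2}}\rangle$, the $\mu_1\delta_t e^{n-\frac{1}{2}}$ factor is treated by Lemma \ref{lemmma2.6} with $(u,w,R)\to(e,\rho,Q)$ and telescopes, after summation, into the discrete energy $\tfrac{\mu_1\lambda}{2}[|e^m|_1^2+\tfrac{h^2}{12}\|\rho^m\|^2-\tfrac{h^4}{144}|\rho^m|_1^2]$ plus $Q$-dependent remainders controlled by (3.8)--(3.9); the $\mu_2 e^{n-\frac{1}{2}}$ factor is treated by Lemma \ref{lemma2.5} and contributes a $\mu_2\lambda|e^{n-\frac{1}{2}}|_1^2$ term.

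The delicate ingredient is the convection. I would split
\[
\psi(U^{n-\frac{1}{2}},U^{n-\frac{1}{2}})-\psi(u^{n-\frac{1}{2}},u^{n-\frac{1}{2}})=\psi(U^{n-\frac{1}{2}},e^{n-\frac{1}{2}})+\psi(e^{n-\frac{1}{2}},u^{n-\frac{1}{2}}),
\]
and analogously for $\psi(W^{n-\frac{1}{2}},U^{n-\frac{1}{2}})-\psi(w^{n-\frac{1}{2}},u^{n-\frac{1}{2}})$. Lemma \ref{lem4.3} then replaces each second-slot $n-\frac{1}{2}$ by $n-1$ inside the pairings against $\delta_t e^{n-\frac{1}{2}}$, which kills a would-be $O(\tau)$ loss generated by a $\delta_t e$ factor appearing in the second slot of $\psi$. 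Each resulting bilinear expression is bounded by Cauchy--Schwarz, the induction-based $L^\infty$ estimates, Lemma \ref{lemma2.1} (in particular $\|\Delta_x\cdot\|\leq |\cdot|_1$ and $\|\cdot\|_\infty\leq \tfrac{\sqrt{L}}{2}|\cdot|_1$), and Young's inequality, tuned so that every quadratic part is absorbed either into $c_1\mu_1^2\tau\sum\|\delta_t e^{n-\frac{1}{2}}\|^2$ (from the fractional step) or into $\tfrac{\mu_1\lambda}{2}|e^m|_1^2$ (from the diffusion step). The constants $c_3,\ldots,c_7$ are precisely what this absorption produces, and $c_8$ records what is left over in the form $c_8\tau\sum_{k=1}^{m}|e^k|_1^2$.

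Combining these pieces with $\|P^{n-\frac{1}{2}}\|\leq \tilde C\sqrt{L}(\tau^{2-\alpha}+h^4)$ and the analogous $Q$ and $\delta_t Q$ bounds, collecting all $(\tau^{2-\alpha}+h^4)^2$ terms into a single coefficient $c_9$, and using $c_8\tau\leq 1/3$ to move the $|e^m|_1^2$ piece from the right to the left, one obtains an inequality of the form
\[
|e^m|_1^2 \;\leq\; (3\tilde C+6Tc_9)(\tau^{2-\alpha}+h^4)^2 + 6c_8\tau\sum_{k=1}^{m-1}|e^k|_1^2.
\]
Lemma \ref{lem4.4} (discrete Gr\"onwall) with $b_m := (3\tilde C+6Tc_9)(\tau^{2-\alpha}+h^4)^2$ and $d_k \equiv 6c_8\tau$ then yields $|e^m|_1 \leq c_{10}(\tau^{2-\alpha}+h^4)$, closing the induction. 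The main obstacle is the bookkeeping of the nonlinear decompositions: several cross terms must each land with a coefficient small enough to be absorbed into the two available positive reservoirs without enlarging $c_{10}$, and the use of Lemma \ref{lem4.3} to eliminate the $\delta_t e$ in the second slot of $\psi$ is what makes this possible.
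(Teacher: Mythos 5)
Your overall strategy is the paper's: induction on the time level, the a priori bounds $|e^j|_1\le 1$ and $\|e^j\|_\infty\le \sqrt{L}/2$ extracted from the hypothesis $\tau^{2-\alpha}+h^4\le 1/c_{10}$, testing the error equation with $\mu_1\delta_t e^{n-\frac12}+\mu_2 e^{n-\frac12}$, Lemma~\ref{lem4.2} for the fractional part (with vanishing initial datum by (\ref{eqn4,5})), Lemmas~\ref{lemma2.5} and \ref{lemmma2.6} to telescope the diffusion term, Young-type absorption, the restriction $c_8\tau\le 1/3$, and the discrete Gr\"onwall Lemma~\ref{lem4.4}. However, two steps do not go through as literally written.

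First, the convection term. After the split $\psi(U^{n-\frac12},U^{n-\frac12})-\psi(u^{n-\frac12},u^{n-\frac12})=\psi(U^{n-\frac12},e^{n-\frac12})+\psi(e^{n-\frac12},u^{n-\frac12})$ you invoke Lemma~\ref{lem4.3} to replace ``each second-slot $n-\frac12$ by $n-1$''. That lemma only applies when the second slot and the test function are built from the same grid function, i.e.\ to $\langle\psi(\cdot\,,v^{n-\frac12}),\delta_t v^{n-\frac12}\rangle$; for $\langle\psi(e^{n-\frac12},u^{n-\frac12}),\delta_t e^{n-\frac12}\rangle$ the discarded piece is $\frac{\tau}{2}\langle\psi(e^{n-\frac12},\delta_t u^{n-\frac12}),\delta_t e^{n-\frac12}\rangle$, which Lemma~\ref{lemma2.4} does not annihilate. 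This is not cosmetic: at the top level $n=N$ neither $e^N$ nor $u^N$ is controlled by the induction hypothesis, so a direct estimate of $\psi(e^{n-\frac12},u^{n-\frac12})$ is cubic in uncontrolled quantities and cannot be absorbed. The repair is precisely the paper's decomposition: write $u=U-e$, i.e.\ $\psi(e,u)=\psi(e,U)-\psi(e,e)$, drop or transform the pure-$e$ pairings with Lemma~\ref{lemma2.4} and apply Lemma~\ref{lem4.3} only to $\psi(e^{n-\frac12},e^{n-\frac12})$ (and, in the $h^2/2$ correction, to $\psi(\rho^{n-\frac12},e^{n-\frac12})$), so that only $e^{n-1}$, bounded by induction, appears in the second slot; equivalently, split $\delta_t u=\delta_t U-\delta_t e$ and kill the $\delta_t e$ piece by Lemma~\ref{lemma2.4}. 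Second, the final recursion cannot be run on $|e^m|_1^2$ alone: the estimates of the $h^2/2$ term and of the diffusion part leave $h^2\|\rho^{n-\frac12}\|^2$ contributions whose only reservoir is the energy $H^n:=|e^n|_1^2+\frac{h^2}{12}\|\rho^n\|^2-\frac{h^4}{144}|\rho^n|_1^2$ (via the inverse estimate $|\rho|_1\le\frac{2}{h}\|\rho\|$), so the Gr\"onwall argument must be applied to $H^n$, with $H^0\lesssim \|\rho^0\|^2\le \tilde C(\tau^{2-\alpha}+h^4)^2$ supplied by (\ref{eqn3.8}), and one concludes using $H^N\ge |e^N|_1^2$. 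With these two corrections your outline reproduces the paper's proof.
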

    \begin{proof}
     we adopt mathematical induction to prove this theorem.
      \vskip 0.2mm
      Step 1: It is easy to know (\ref*{eqn4.7}) holds for $n=0$. Taking $n=0$ in (\ref*{eqn4,4}) and noticing (\ref*{eqn4,5}) and (\ref*{eqn4,6}), we have
      \begin{equation}\label{eqn4.8}
        \rho_i^0=-\frac{h^2}{12}\delta_x^2 \rho_i^0+Q_i^0,\quad 1\leq i \leq M-1.
      \end{equation}
      Taking an inner product of (\ref*{eqn4.8}) with $\rho^0$ and using (\ref*{eq2.1}), we have 
      \begin{equation*}
        \|\rho^0\|^2=\frac{h^2}{12}|\rho^0|_1^2+\langle Q^0, \rho^0 \rangle \leq \frac{1}{3}\|\rho^0\|^2+\frac{1}{3}\|\rho^0\|^2+\frac{3}{4}\|Q^0\|^2,
      \end{equation*}
      combining with (\ref*{eqn3.8}), we can get
      \begin{equation}\label{eqn4.9}
        \|\rho^0\|^2 \leq \frac{9}{4}\|Q^0\|^2 \leq \frac{9}{4}L(\tilde{C}h^4)^2\leq\tilde{C}(\tau^{2-\alpha}+h^4)^2.
      \end{equation}
      \vskip 0.2mm
      Step 2: Assume that (\ref*{eqn4.7}) holds for $0\leq n\leq N-1$. When $\tau^{2-\alpha} +h^4 \leq 1/c_{10}$, according to (\ref*{eq2.1}), we can get
      \begin{equation}\label{eqn4.10}
        |e^n|_1 \leq 1, \quad \|e^n\| \leq \frac{L}{\sqrt{6}}|e^n|_1 \leq\frac{L}{\sqrt{6}}, \quad \|e^n\|_{\infty}\leq \frac{\sqrt{L}}{2}|e^n|_1\leq \frac{\sqrt{L}}{2},\quad 0\leq n\leq N-1.
      \end{equation}
      \vskip 0.2mm
      Step 3: Next, we need to prove that (\ref*{eqn4.7}) holds for $n=N$. Taking an inner product of (\ref*{eqn4.3}) with $(\mu_1 \delta_t e^{n-\frac{1}{2}}+\mu_2  e^{n-\frac{1}{2}})$ and summing for $n$ from $1$ to $N$, we denote
      \begin{equation*}
        \begin{split}
          &\Lambda_1^{n-\frac{1}{2}}:=\sum\limits_{n=1}^{N}\left\langle \frac{\tau^{-1}}{\Gamma(1-\alpha)}\mathcal{D}_t^{(\alpha)}( \mu_1 \delta_t e^{n-\frac{1}{2}}+\mu_2  e^{n-\frac{1}{2}}),\mu_1 \delta_t e^{n-\frac{1}{2}}+\mu_2  e^{n-\frac{1}{2}}\right\rangle,\\
          &\Lambda_2^{n-\frac{1}{2}}:=-\left\langle \psi(U^{n-\frac{1}{2}},U^{n-\frac{1}{2}})-\psi(u^{n-\frac{1}{2}},u^{n-\frac{1}{2}}),\mu_1 \delta_t e^{n-\frac{1}{2}}+\mu_2  e^{n-\frac{1}{2}} \right\rangle,\\
          &\Lambda_3^{n-\frac{1}{2}}:=\frac{h^2}{2}\left\langle \psi(W^{n-\frac{1}{2}},U^{n-\frac{1}{2}})-\psi(w^{n-\frac{1}{2}},u^{n-\frac{1}{2}}),\mu_1 \delta_t e^{n-\frac{1}{2}}+\mu_2  e^{n-\frac{1}{2}} \right\rangle,\\
          &\Lambda_4^{n-\frac{1}{2}}:=\lambda\left\langle \rho^{n-\frac{1}{2}},\mu_1 \delta_t e^{n-\frac{1}{2}}+\mu_2  e^{n-\frac{1}{2}} \right\rangle,\\
          &\Lambda_5^{n-\frac{1}{2}}:=\left\langle P^{n-\frac{1}{2}},\mu_1 \delta_t e^{n-\frac{1}{2}}+\mu_2  e^{n-\frac{1}{2}} \right\rangle.\\
        \end{split}
      \end{equation*}
      Therefore, we get the following equation
      \begin{equation}\label{eqn4.11}
        \Lambda_1^{n-\frac{1}{2}}-\sum\limits_{n=1}^{N}\Lambda_2^{n-\frac{1}{2}}-\sum\limits_{n=1}^{N}\Lambda_3^{n-\frac{1}{2}}-\sum\limits_{n=1}^{N}\Lambda_4^{n-\frac{1}{2}}=\sum\limits_{n=1}^{N}\Lambda_5^{n-\frac{1}{2}}.
      \end{equation}
      Considering $\Lambda_1^{n-\frac{1}{2}}$, using  Lemma \ref*{lem4.2}, and noticing (\ref*{eqn4,5}), we have
      \begin{equation}\label{eqn4.12}
        \begin{split}
          \Lambda_1^{n-\frac{1}{2}} &\geq \sum\limits_{n=1}^{N} \frac{T^{-\alpha}}{2\Gamma(1-\alpha)}\| \mu_1 \delta_t e^{n-\frac{1}{2}}+\mu_2  e^{n-\frac{1}{2}}\|^2-\frac{T^{1-\alpha}}{2\tau \Gamma(2-\alpha)}\| \mu_1 \sigma^0+\mu_2  e^0\|^2\\
          &=\sum\limits_{n=1}^{N} \frac{T^{-\alpha}}{2\Gamma(1-\alpha)}\| \mu_1 \delta_t e^{n-\frac{1}{2}}+\mu_2  e^{n-\frac{1}{2}}\|^2.\\
        \end{split}
      \end{equation}
      Thus, (\ref*{eqn4.11}) turns into 
      \begin{equation}\label{eqn4.13}
        \sum\limits_{n=1}^{N}c_1\| \mu_1 \delta_t e^{n-\frac{1}{2}}+\mu_2  e^{n-\frac{1}{2}}\|^2-\sum\limits_{n=1}^{N}\Lambda_4^{n-\frac{1}{2}} \leq \sum\limits_{n=1}^{N}\Lambda_2^{n-\frac{1}{2}}+\sum\limits_{n=1}^{N}\Lambda_3^{n-\frac{1}{2}}+\sum\limits_{n=1}^{N}\Lambda_5^{n-\frac{1}{2}}.
      \end{equation}
      Due to 
      \begin{equation*}
          \| \mu_1 \delta_t e^{n-\frac{1}{2}}+\mu_2 e^{n-\frac{1}{2}}\|^2 \geq \left(\| \mu_1 \delta_t e^{n-\frac{1}{2}}\|-\|\mu_2 e^{n-\frac{1}{2}} \|\right)^2
          \geq  \mu_1^2\| \delta_t e^{n-\frac{1}{2}}\|^2-2\mu_1\mu_2\| \delta_t e^{n-\frac{1}{2}}\| \| e^{n-\frac{1}{2}} \|,
      \end{equation*}
      then, (\ref*{eqn4.13}) becomes 
      \begin{equation*}
        \sum\limits_{n=1}^{N}\left( c_1\mu_1^2\|\delta_t e^{n-\frac{1}{2}}\|^2-\Lambda_4^{n-\frac{1}{2}}\right)  \leq \sum\limits_{n=1}^{N} \left(2c_1\mu_1\mu_2\|e^{n-\frac{1}{2}}\|\| \delta_te^{n-\frac{1}{2}}\|+\Lambda_2^{n-\frac{1}{2}}+\Lambda_3^{n-\frac{1}{2}}+\Lambda_5^{n-\frac{1}{2}}\right).
      \end{equation*}
      Furthermore, we have
      \begin{equation*}
        \sum\limits_{n=1}^{N}\left( \|\delta_t e^{n-\frac{1}{2}}\|^2-\frac{1}{c_1\mu_1^2}\Lambda_4^{n-\frac{1}{2}}\right)  \leq \sum\limits_{n=1}^{N} \left(\frac{2\mu_2}{\mu_1}\|e^{n-\frac{1}{2}}\|\| \delta_te^{n-\frac{1}{2}}\|+\frac{1}{c_1\mu_1^2}(\Lambda_2^{n-\frac{1}{2}}+\Lambda_3^{n-\frac{1}{2}}+\Lambda_5^{n-\frac{1}{2}})\right),
      \end{equation*}
      that is 
      \begin{equation}\label{eqn4.14}
        \sum\limits_{n=1}^{N}\left( \|\delta_t e^{n-\frac{1}{2}}\|^2-c_2\Lambda_4^{n-\frac{1}{2}}\right)  \leq \sum\limits_{n=1}^{N} \left(\frac{2\mu_2}{\mu_1}\|e^{n-\frac{1}{2}}\|\| \delta_te^{n-\frac{1}{2}}\|+c_2\Lambda_2^{n-\frac{1}{2}}+c_2\Lambda_3^{n-\frac{1}{2}}+c_2\Lambda_5^{n-\frac{1}{2}}\right).
      \end{equation}
      \vskip 0.2mm
      Next, we analyze $c_2\Lambda_2^{n-\frac{1}{2}},c_2\Lambda_3^{n-\frac{1}{2}},c_2\Lambda_4^{n-\frac{1}{2}},c_2\Lambda_5^{n-\frac{1}{2}} $ and $\frac{2\mu_2}{\mu_1}\|e^{n-\frac{1}{2}}\|\|\delta_t e^{n-\frac{1}{2}}\|$ one by one.
      \vskip 0.2mm
      \textbf{(\Rmnum{1}).} Considering $c_2\Lambda_2^{n-\frac{1}{2}}$, according to $u_i^n=U_i^n-e_i^n$ and bilinearity of $\psi$, we have
      \begin{equation*}
        \begin{split}
          &\psi(U^{n-\frac{1}{2}},U^{n-\frac{1}{2}})-\psi(u^{n-\frac{1}{2}},u^{n-\frac{1}{2}})\\
          =&\psi(U^{n-\frac{1}{2}},e^{n-\frac{1}{2}})+\psi(e^{n-\frac{1}{2}},U^{n-\frac{1}{2}})-\psi(e^{n-\frac{1}{2}},e^{n-\frac{1}{2}}),
        \end{split}
      \end{equation*}
      then
      \[c_2\Lambda_2^{n-\frac{1}{2}}=-c_2\left\langle \psi(U^{n-\frac{1}{2}},e^{n-\frac{1}{2}})+\psi(e^{n-\frac{1}{2}},U^{n-\frac{1}{2}})-\psi(e^{n-\frac{1}{2}},e^{n-\frac{1}{2}}),\mu_1 \delta_t e^{n-\frac{1}{2}}+\mu_2 e^{n-\frac{1}{2}} \right\rangle,\]
      and noticing Lemma \ref*{lemma2.4} and Lemma \ref*{lem4.3}, we have
      \begin{equation*}
        \begin{split}
          c_2\Lambda_2^{n-\frac{1}{2}}=&-c_2\mu_1\left\langle \psi(U^{n-\frac{1}{2}},e^{n-\frac{1}{2}})+\psi(e^{n-\frac{1}{2}},U^{n-\frac{1}{2}}),\delta_t e^{n-\frac{1}{2}} \right\rangle\\
          &+c_2\mu_1\left\langle \psi(e^{n-\frac{1}{2}},e^{n-1}),\delta_t e^{n-\frac{1}{2}} \right\rangle-c_2\mu_2\left\langle \psi(e^{n-\frac{1}{2}},U^{n-\frac{1}{2}}),e^{n-\frac{1}{2}} \right\rangle\\
          =&-c_2\mu_1\frac{h}{3}\sum\limits_{i=1}^{M-1}\left[ U_i^{n-\frac{1}{2}} \Delta_x e_i^{n-\frac{1}{2}}+2\Delta_x(U_i^{n-\frac{1}{2}}e_i^{n-\frac{1}{2}}) +e_i^{n-\frac{1}{2}}\Delta_x U_i^{n-\frac{1}{2}} \right]\delta_t e_i^{n-\frac{1}{2}}\\
          &+c_2\mu_1\frac{h}{3}\sum\limits_{i=1}^{M-1}\left[ e_i^{n-\frac{1}{2}} \Delta_x e_i^{n-1}+\Delta_x(e_i^{n-\frac{1}{2}}e_i^{n-1}) \right]\delta_t e_i^{n-\frac{1}{2}}\\
          &-c_2\mu_2\frac{h}{3}\sum\limits_{i=1}^{M-1}\left[ e_i^{n-\frac{1}{2}} \Delta_x U_i^{n-\frac{1}{2}}+\Delta_x(e_i^{n-\frac{1}{2}}U_i^{n-\frac{1}{2}}) \right]e_i^{n-\frac{1}{2}}\\
          =&-c_2\mu_1\frac{h}{3}\sum\limits_{i=1}^{M-1}\left[ 3U_i^{n-\frac{1}{2}} \Delta_x e_i^{n-\frac{1}{2}}+e_{i+1}^{n-\frac{1}{2}}\delta_x U_{i+\frac{1}{2}}^{n-\frac{1}{2}}+e_{i-1}^{n-\frac{1}{2}}\delta_x U_{i-\frac{1}{2}}^{n-\frac{1}{2}} +e_i^{n-\frac{1}{2}}\Delta_x U_i^{n-\frac{1}{2}} \right]\delta_t e_i^{n-\frac{1}{2}}\\
          &+c_2\mu_1\frac{h}{3}\sum\limits_{i=1}^{M-1}\left[ 2e_i^{n-\frac{1}{2}} \Delta_x e_i^{n-1}+\frac{1}{2}e_{i+1}^{n-1}\delta_x e_{i+\frac{1}{2}}^{n-\frac{1}{2}}+\frac{1}{2}e_{i-1}^{n-1}\delta_x e_{i-\frac{1}{2}}^{n-\frac{1}{2}} \right]\delta_t e_i^{n-\frac{1}{2}}\\
          &-c_2\mu_2\frac{h}{3}\sum\limits_{i=1}^{M-1}\left[ 2e_i^{n-\frac{1}{2}} \Delta_x U_i^{n-\frac{1}{2}}+\frac{1}{2}U_{i+1}^{n-\frac{1}{2}}\delta_x e_{i+\frac{1}{2}}^{n-\frac{1}{2}}+\frac{1}{2}U_{i-1}^{n-\frac{1}{2}}\delta_x e_{i-\frac{1}{2}}^{n-\frac{1}{2}} \right]e_i^{n-\frac{1}{2}}.\\
        \end{split}
      \end{equation*}
      Using (\ref*{eqn4.10}), Cauchy-Schwarz inequality, Young inequality, and Lemma \ref*{lemma2.1}, we can get
      \begin{equation*}
        \begin{split}
          c_2\Lambda_2^{n-\frac{1}{2}} \leq &c_2\mu_1c_0|e^{n-\frac{1}{2}}|_1\|\delta_t e^{n-\frac{1}{2}}\|+c_2\mu_1c_0\|e^{n-\frac{1}{2}}\|\|\delta_t e^{n-\frac{1}{2}}\|  \\
          &+\frac{c_2\mu_1}{3}\left[ 2\|e^{n-\frac{1}{2}}\|_{\infty}|e^{n-1}|_1+\|e^{n-1}\|_{\infty}|e^{n-\frac{1}{2}}|_1 \right]\|\delta_t e^{n-\frac{1}{2}}\|  \\
           &+\frac{c_2\mu_2c_0}{3}\left[ 2\|e^{n-\frac{1}{2}}\|^2+|e^{n-\frac{1}{2}}|_1\|e^{n-\frac{1}{2}}\| \right]\\
           \leq &c_2\mu_1c_0|e^{n-\frac{1}{2}}|_1\|\delta_t e^{n-\frac{1}{2}}\|+c_2\mu_1c_0\frac{L}{\sqrt{6}}|e^{n-\frac{1}{2}}|_1\|\delta_t e^{n-\frac{1}{2}}\|  \\
           &+\frac{c_2\mu_1}{3}\left[ 2\frac{\sqrt{L}}{2}|e^{n-\frac{1}{2}}|_1 \cdot 1+\frac{\sqrt{L}}{2}|e^{n-\frac{1}{2}}|_1 \right]\|\delta_t e^{n-\frac{1}{2}}\|  \\
            &+\frac{c_2\mu_2c_0}{3}\left[ 2\frac{L^2}{6} |e^{n-\frac{1}{2}}|_1^2+\frac{L}{\sqrt{6}}|e^{n-\frac{1}{2}}|_1^2 \right]\\
          \leq &c_2\mu_1(c_0+\frac{c_0L}{\sqrt{6}}+\frac{\sqrt{L}}{2})|e^{n-\frac{1}{2}}|_1\|\delta_t e^{n-\frac{1}{2}}\|+c_2\mu_2c_0(L^2+L)|e^{n-\frac{1}{2}}|_1^2\\
          =&c_3|e^{n-\frac{1}{2}}|_1\|\delta_t e^{n-\frac{1}{2}}\|+c_4|e^{n-\frac{1}{2}}|_1^2\\
          \leq & \frac{1}{5}\|\delta_t e^{n-\frac{1}{2}}\|^2+(\frac{5c_3^2}{4}+c_4)|e^{n-\frac{1}{2}}|_1^2.
        \end{split}
      \end{equation*}
      \vskip 0.2mm
      \textbf{(\Rmnum{2}).} Similarly, considering $c_2\Lambda_3^{n-\frac{1}{2}}$, according to $u_i^n=U_i^n-e_i^n$ and $w_i^n=W_i^n-\rho_i^n$, we have
      \begin{equation*}
        \begin{split}
          &\psi(W^{n-\frac{1}{2}},U^{n-\frac{1}{2}})-\psi(w^{n-\frac{1}{2}},u^{n-\frac{1}{2}})\\
          =&\psi(\rho^{n-\frac{1}{2}},U^{n-\frac{1}{2}})+\psi(W^{n-\frac{1}{2}},e^{n-\frac{1}{2}})-\psi(\rho^{n-\frac{1}{2}},e^{n-\frac{1}{2}}),
        \end{split}
      \end{equation*}
      then, using Lemma \ref*{lemma2.4} and Lemma \ref*{lem4.3}, we can get
      \begin{equation*}
      \begin{split}
        c_2\Lambda_3^{n-\frac{1}{2}}=&c_2\mu_1\frac{h^2}{2}\left\langle \psi(W^{n-\frac{1}{2}},e^{n-\frac{1}{2}})+\psi(\rho^{n-\frac{1}{2}},U^{n-\frac{1}{2}})-\psi(\rho^{n-\frac{1}{2}},e^{n-1}),\delta_t e^{n-\frac{1}{2}} \right\rangle\\
        &+c_2\mu_2\frac{h^2}{2}\left\langle \psi(\rho^{n-\frac{1}{2}},U^{n-\frac{1}{2}}),e^{n-\frac{1}{2}} \right\rangle\\
        =&c_2\mu_1\frac{h^3}{6}\sum\limits_{i=1}^{M-1}\left[ 2W_i^{n-\frac{1}{2}} \Delta_x e_i^{n-\frac{1}{2}}+\frac{1}{2}e_{i+1}^{n-\frac{1}{2}}\delta_x W_{i+\frac{1}{2}}^{n-\frac{1}{2}}+\frac{1}{2}e_{i-1}^{n-\frac{1}{2}}\delta_x W_{i-\frac{1}{2}}^{n-\frac{1}{2}} \right]\delta_t e_i^{n-\frac{1}{2}}\\
        &+c_2\mu_1\frac{h^3}{6}\sum\limits_{i=1}^{M-1}\left[ 2\rho_i^{n-\frac{1}{2}} \Delta_x U_i^{n-\frac{1}{2}}+\frac{1}{2}U_{i+1}^{n-\frac{1}{2}}\delta_x \rho_{i+\frac{1}{2}}^{n-\frac{1}{2}}+\frac{1}{2}U_{i-1}^{n-\frac{1}{2}}\delta_x \rho_{i-\frac{1}{2}}^{n-\frac{1}{2}} \right]\delta_t e_i^{n-\frac{1}{2}}\\
        &-c_2\mu_1\frac{h^3}{6}\sum\limits_{i=1}^{M-1}\left[ 2\rho_i^{n-\frac{1}{2}} \Delta_x e_i^{n-1}+\frac{1}{2}e_{i+1}^{n-1}\delta_x \rho_{i+\frac{1}{2}}^{n-\frac{1}{2}}+\frac{1}{2}e_{i-1}^{n-1}\delta_x \rho_{i-\frac{1}{2}}^{n-\frac{1}{2}} \right]\delta_t e_i^{n-\frac{1}{2}}\\
        &+c_2\mu_2\frac{h^3}{6}\sum\limits_{i=1}^{M-1}\left[ 2\rho_i^{n-\frac{1}{2}} \Delta_x U_i^{n-\frac{1}{2}}+\frac{1}{2}U_{i+1}^{n-\frac{1}{2}}\delta_x \rho_{i+\frac{1}{2}}^{n-\frac{1}{2}}+\frac{1}{2}U_{i-1}^{n-\frac{1}{2}}\delta_x \rho_{i-\frac{1}{2}}^{n-\frac{1}{2}} \right] e_i^{n-\frac{1}{2}}.\\
      \end{split}
    \end{equation*}
    Combining with (\ref*{eqn4.10}) and Lemma \ref*{lemma2.1}, and using Cauchy-Schwarz inequality and Young inequality, we can get
    \begin{equation*}
      \begin{split}
        c_2\Lambda_3^{n-\frac{1}{2}} \leq &c_2\mu_1\frac{h^2}{6}\left[ 2c_0|e^{n-\frac{1}{2}}|_1+c_0\|e^{n-\frac{1}{2}}\|+2c_0\|\rho^{n-\frac{1}{2}}\|+c_0|\rho^{n-\frac{1}{2}}|_1 \right]\|\delta_t e^{n-\frac{1}{2}}\|\\
        &+c_2\mu_1\frac{h^2}{6}\left[ 2\|\rho^{n-\frac{1}{2}}\|_{\infty}|e^{n-1}|_1+\|e^{n-1}\|_{\infty}|\rho^{n-\frac{1}{2}}|_1 \right]\|\delta_t e^{n-\frac{1}{2}}\|\\
        &+c_2\mu_2\frac{h^2}{6}\left[ 2c_0\|\rho^{n-\frac{1}{2}}\|+c_0|\rho^{n-\frac{1}{2}}|_1 \right]\| e^{n-\frac{1}{2}}\|,
      \end{split}
    \end{equation*}
    then we have
     \begin{equation*}
      \begin{split}
        c_2\Lambda_3^{n-\frac{1}{2}} \leq &c_2\mu_1\frac{h^2}{6}(2c_0+\frac{c_0L}{\sqrt{6}})|e^{n-\frac{1}{2}}|_1\|\delta_t e^{n-\frac{1}{2}}\|+c_2\mu_1\frac{h^2}{3}(c_0+\frac{c_0}{h})\|\rho^{n-\frac{1}{2}}\|\|\delta_t e^{n-\frac{1}{2}}\|\\
        &+c_2\mu_1h\frac{\sqrt{L}}{2}\|\rho^{n-\frac{1}{2}}\|\|\delta_t e^{n-\frac{1}{2}}\|+c_2\mu_2c_0\frac{h^2}{3}(1+\frac{1}{h})\frac{L}{\sqrt{6}}\|\rho^{n-\frac{1}{2}}\||e^{n-\frac{1}{2}}|_1\\
        \leq &c_2\mu_1c_0(2+L)|e^{n-\frac{1}{2}}|_1\|\delta_t e^{n-\frac{1}{2}}\|+c_2\mu_1(2c_0+\sqrt{L})h\|\rho^{n-\frac{1}{2}}\|\|\delta_t e^{n-\frac{1}{2}}\| \\
        &+c_2\mu_2c_0Lh\|\rho^{n-\frac{1}{2}}\||e^{n-\frac{1}{2}}|_1\\
        =&c_5|e^{n-\frac{1}{2}}|_1\|\delta_t e^{n-\frac{1}{2}}\|+c_6h\|\rho^{n-\frac{1}{2}}\|\|\delta_t e^{n-\frac{1}{2}}\|+c_7h\|\rho^{n-\frac{1}{2}}\||e^{n-\frac{1}{2}}|_1\\
        \leq &\frac{1}{10}\| \delta_t e^{n-\frac{1}{2}}\|^2+\frac{5}{2}c_5^2|e^{n-\frac{1}{2}} |_1^2+\frac{1}{10}\| \delta_t e^{n-\frac{1}{2}}\|^2+\frac{5}{2}c_6^2h^2\| \rho^{n-\frac{1}{2}}\|^2+\frac{1}{2}|e^{n-\frac{1}{2}} |_1^2+\frac{1}{2}c_7^2h^2\| \rho^{n-\frac{1}{2}}\|^2 \\
        \leq &\frac{1}{5}\|\delta_t e^{n-\frac{1}{2}}\|^2+(\frac{5}{2}c_5^2+\frac{1}{2})|e^{n-\frac{1}{2}}|_1^2+(\frac{5}{2}c_6^2+\frac{1}{2}c_7^2)h^2\|\rho^{n-\frac{1}{2}}\|^2.
      \end{split}
    \end{equation*}
    \vskip 0.2mm
    \textbf{(\Rmnum{3}).} Considering $c_2\Lambda_4^{n-\frac{1}{2}}$. Firstly,
     \begin{equation*}
        c_2\Lambda_4^{n-\frac{1}{2}}=c_2\mu_1\lambda\left\langle \rho^{n-\frac{1}{2}},\delta_t e^{n-\frac{1}{2}} \right\rangle+c_2\mu_2\lambda\left\langle \rho^{n-\frac{1}{2}},e^{n-\frac{1}{2}} \right\rangle.
    \end{equation*}
     Then using Lemma \ref*{lemma2.5} and Lemma \ref*{lemmma2.6}, we can get
    \begin{equation*}
      \begin{split}
        -c_2\Lambda_4^{n-\frac{1}{2}} \geq &\frac{c_2\mu_1\lambda}{2\tau}\left[(|e^n|_1^2-|e^{n-1}|_1^2)+\frac{h^2}{12}(\|\rho^n\|^2-\|\rho^{n-1}\|^2)-\frac{h^4}{144}(|\rho^n|_1^2-|\rho^{n-1}|_1^2)\right]\\
        &-\frac{c_2\mu_1\lambda h^2}{12}\langle \rho^{n-\frac{1}{2}},\delta_t Q^{n-\frac{1}{2}}\rangle-c_2\mu_1\lambda\langle Q^{n-\frac{1}{2}},\delta_t e^{n-\frac{1}{2}}\rangle \\
        &+c_2\mu_2\lambda\left[|e^{n-\frac{1}{2}}|_1^2+\frac{h^2}{18}\|\rho^{n-\frac{1}{2}}\|_1^2\right]-\frac{c_2\mu_2\lambda h^2}{12}\langle  \rho^{n-\frac{1}{2}},Q^{n-\frac{1}{2}} \rangle-c_2\mu_2\lambda\langle Q^{n-\frac{1}{2}},e^{n-\frac{1}{2}} \rangle\\
        \geq &\frac{c_2\mu_1\lambda}{2\tau}\left[(|e^n|_1^2-|e^{n-1}|_1^2)+\frac{h^2}{12}(\|\rho^n\|^2-\|\rho^{n-1}\|^2)-\frac{h^4}{144}(|\rho^n|_1^2-|\rho^{n-1}|_1^2)\right]\\
        &-\frac{c_2\mu_1\lambda h^2}{12}\langle \rho^{n-\frac{1}{2}},\delta_t Q^{n-\frac{1}{2}}\rangle-c_2\mu_1\lambda\langle Q^{n-\frac{1}{2}},\delta_t e^{n-\frac{1}{2}}\rangle \\
        &-\frac{c_2\mu_2\lambda h^2}{12}\langle  \rho^{n-\frac{1}{2}},Q^{n-\frac{1}{2}} \rangle-c_2\mu_2\lambda\langle Q^{n-\frac{1}{2}},e^{n-\frac{1}{2}} \rangle\\
        \geq & \frac{c_2\mu_1\lambda}{2\tau}\left[(|e^n|_1^2-|e^{n-1}|_1^2)+\frac{h^2}{12}(\|\rho^n\|^2-\|\rho^{n-1}\|^2)-\frac{h^4}{144}(|\rho^n|_1^2-|\rho^{n-1}|_1^2)\right]\\
        &-\left( c_2\mu_1\lambda h\| \rho^{n-\frac{1}{2}}\| \|\delta_t Q^{n-\frac{1}{2}} \|+c_2\mu_1\lambda\| Q^{n-\frac{1}{2}}\|\|\delta_t e^{n-\frac{1}{2}} \| \right)\\
        &-\left( c_2\mu_2\lambda h\| \rho^{n-\frac{1}{2}}\| \|Q^{n-\frac{1}{2}} \|+c_2\mu_2\lambda\| Q^{n-\frac{1}{2}}\|\|e^{n-\frac{1}{2}} \| \right)\\
        \geq &\frac{c_2\mu_1\lambda}{2\tau}\left[(|e^n|_1^2-|e^{n-1}|_1^2)+\frac{h^2}{12}(\|\rho^n\|^2-\|\rho^{n-1}\|^2)-\frac{h^4}{144}(|\rho^n|_1^2-|\rho^{n-1}|_1^2)\right]\\
        &-\Big( \frac{1}{5}\|\delta_t e^{n-\frac{1}{2}}\|^2+h^2\| \rho^{n-\frac{1}{2}}\|^2+\frac{L^2}{12}|e^{n-\frac{1}{2}}|_1^2+\frac{1}{2}(c_2\mu_1\lambda)^2\|\delta_t Q^{n-\frac{1}{2}} \|^2 \\
         &  + c_2^2 \lambda^2(\frac{5}{4}\mu_1^2+\mu_2^2)\|Q^{n-\frac{1}{2}} \|^2\Big). \\
      \end{split}
    \end{equation*}
    \vskip 0.2mm
    \textbf{(\Rmnum{4}).} Considering $c_2\Lambda_5^{n-\frac{1}{2}}$, we have
    \begin{equation*}
      \begin{split}
        c_2\Lambda_5^{n-\frac{1}{2}}\leq &c_2\mu_1\|P^{n-\frac{1}{2}}\|\|\delta_t e^{n-\frac{1}{2}}\|+c_2\mu_2\|P^{n-\frac{1}{2}}\|\|e^{n-\frac{1}{2}}\|\\
        \leq &\frac{1}{5}\|\delta_t e^{n-\frac{1}{2}}\|^2+(\frac{5}{4}c_2^2\mu_1^2+\frac{1}{2}c_2^2\mu_2^2)\|P^{n-\frac{1}{2}}\|^2 +\frac{L^2}{12}|e^{n-\frac{1}{2}}|_1^2.
      \end{split}
    \end{equation*}
    \vskip 0.2mm
    \textbf{(\Rmnum{5}).} Finally, we have
    \begin{equation*}
      \frac{2\mu_2}{\mu_1}\|e^{n-\frac{1}{2}}\|\| \delta_te^{n-\frac{1}{2}}\| \leq \frac{1}{5}\| \delta_te^{n-\frac{1}{2}}\|+\frac{5\mu_2^2}{\mu_1^2}\|e^{n-\frac{1}{2}}\|^2 \leq \frac{1}{5}\| \delta_te^{n-\frac{1}{2}}\|+\frac{5\mu_2^2L^2}{6\mu_1^2}|e^{n-\frac{1}{2}}|_1^2.
    \end{equation*}
    \vskip 0.2mm
    Now we substitute the analysis results of \textbf{(\Rmnum{1})}-\textbf{(\Rmnum{5})} back into (\ref*{eqn4.14}), we have
   \begin{equation*}
      \begin{split}
& \sum\limits_{n=1}^{N}\biggl\{ \|\delta_t e^{n-\frac{1}{2}}\|^2+\frac{c_2\mu_1\lambda}{2\tau}\left[(|e^n|_1^2-|e^{n-1}|_1^2)+\frac{h^2}{12}(\|\rho^n\|^2-\|\rho^{n-1}\|^2)-\frac{h^4}{144}(|\rho^n|_1^2-|\rho^{n-1}|_1^2)\right] \biggr\}\\
\leq & \sum\limits_{n=1}^{N} \biggl\{\|\delta_t e^{n-\frac{1}{2}}\|^2+( \frac{5}{4}c_3^2 +c_4+\frac{5}{2}c_5^2+\frac{L^2}{6}+\frac{5\mu_2^2L^2}{6\mu_1^2}+\frac{1}{2} )|e^{n-\frac{1}{2}}|_1^2 +( 15c_6^2+3c_7^2+6 )\frac{h^2}{6}\| \rho^{n-\frac{1}{2}}\|^2 \\
&+( \frac{5}{4}c_2^2\mu_1^2+\frac{1}{2}c_2^2\mu_2^2 )\|P^{n-\frac{1}{2}}\|^2+c_2^2\lambda_2^2(\frac{5}{4}\mu_1^2+\mu_2^2  )\|Q^{n-\frac{1}{2}} \|^2+\frac{1}{2}(c_2\mu_1\lambda )^2\|\delta_t Q^{n-\frac{1}{2}} \|^2 \biggr\}.
      \end{split}
    \end{equation*}
    Furthermore, we have
\begin{equation*}
  \begin{split}
    & \sum\limits_{n=1}^{N}\biggl\{ \frac{1}{2\tau}\left[(|e^n|_1^2-|e^{n-1}|_1^2)+\frac{h^2}{12}(\|\rho^n\|^2-\|\rho^{n-1}\|^2)-\frac{h^4}{144}(|\rho^n|_1^2-|\rho^{n-1}|_1^2)\right]\biggr\}\\
    \leq & \sum\limits_{n=1}^{N}\biggl\{ \Big( \frac{2c_3^2 +c_4+3c_5^2+\frac{5\mu_2^2L^2}{6\mu_1^2}+L^2+1}{c_2\mu_1\lambda} \Big)|e^{n-\frac{1}{2}}|_1^2+\Big( \frac{15c_6^2+3c_7^2+6}{c_2\mu_1\lambda} \Big)\frac{h^2}{6}\| \rho^{n-\frac{1}{2}}\|^2\\
    &+\Big( \frac{2c_2\mu_1}{\lambda}+\frac{c_2\mu_2^2}{2\mu_1\lambda} \Big)\|P^{n-\frac{1}{2}}\|^2+\frac{c_2\lambda_2}{\mu_1}\left(2\mu_1^2+\mu_2^2  \right)\|Q^{n-\frac{1}{2}} \|^2+c_2\mu_1\lambda \|\delta_t Q^{n-\frac{1}{2}} \|^2\biggr\}\\
    \leq & \sum\limits_{n=1}^{N}\biggl\{ \Big( \frac{2c_3^2 +c_4+3c_5^2+\frac{5\mu_2^2L^2}{6\mu_1^2}+L^2+1}{c_2\mu_1\lambda} \Big)|e^{n-\frac{1}{2}}|_1^2+\Big( \frac{15c_6^2+3c_7^2+6}{c_2\mu_1\lambda} +\frac{2}{3}\Big)\frac{h^2}{6}\| \rho^{n-\frac{1}{2}}\|^2-\frac{h^4}{36}| \rho^{n-\frac{1}{2}}|_1^2\\
    &+\Big( \frac{2c_2\mu_1}{\lambda}+\frac{c_2\mu_2^2}{2\mu_1\lambda} \Big)\|P^{n-\frac{1}{2}}\|^2+\frac{c_2\lambda_2}{\mu_1}\left(2\mu_1^2+\mu_2^2  \right)\|Q^{n-\frac{1}{2}} \|^2+c_2\mu_1\lambda \|\delta_t Q^{n-\frac{1}{2}} \|^2 \biggr\}. \\
  \end{split}
\end{equation*}
Define
    \begin{equation*}
      H^n:=|e^n|_1^2+\frac{h^2}{12}\|\rho^n\|^2-\frac{h^4}{144}|\rho^n|_1^2, \quad 0 \leq n\leq N.
    \end{equation*}
    It is easy to konw that $0 \leq \frac{h^2}{18}\|\rho^0\|^2 \leq H^0 \leq \frac{h^2}{12}\|\rho^0\|^2 $ by the inverse estimate.
    Combining (\ref*{eqn3.6})-(\ref*{eqn3.9}), we can get
    \begin{equation*}
      \sum\limits_{n=1}^{N}\Bigl[\frac{1}{2\tau}(H^n-H^{n-1})\Bigr]\leq \sum\limits_{n=1}^{N}\Bigl[c_8(H^n+H^{n-1})+c_9(\tau^{2-\alpha}+h^4)^2\Bigr].
    \end{equation*}
    Then
    \begin{equation*}
        \frac{1}{2\tau}(H^{N}-H^0) \leq c_8H^{N}+2c_8\sum\limits_{n=0}^{N-1} H^n+\sum\limits_{n=0}^{N-1} c_9(\tau^{2-\alpha}+h^4)^2,
    \end{equation*}
  and when $\tau c_8 \leq 1/3$, we can get
  \begin{equation*}
    H^{N}\leq 3H^0+\sum\limits_{n=0}^{N-1} 6\tau c_9(\tau^{2-\alpha}+h^4)^2+\sum\limits_{n=0}^{N-1} 12\tau c_8 H^n, \quad 0 \leq n\leq N-1.
\end{equation*}
    Combining Lemma \ref*{lem4.4} and (\ref*{eqn4.9}), we can get
    \begin{equation*}
      \begin{split}
        H^{N}\leq e^{(12c_8T)}\left(3H^0+6Tc_9(\tau^{2-\alpha}+h^4)^2\right) \leq c_{10}^2(\tau^{2-\alpha}+h^4)^2,
      \end{split}
    \end{equation*}
    and it is easy to know 
    \begin{equation*}
      H^{N} \geq |e^{N}|_1^2+\frac{h^2}{18}\|\rho^{N}\|^2 \geq |e^{N}|_1^2.
    \end{equation*}
    Finally, we can obtain
    \begin{equation*}
      |e^{N}|_1\leq c_{10}(\tau^{2-\alpha}+h^4).
    \end{equation*}
    This completes the proof.
    \end{proof}
    \begin{remark}
      Under the conditions of Theorem \ref*{Th4.1}, combining with (\ref*{eq2.1}), we can obtain
      \begin{equation}
        \|e^n\| \leq \frac{c_{10}L}{\sqrt{6}}(\tau^{2-\alpha}+h^4),\quad \|e^n\|_{\infty}\leq \frac{c_{10}\sqrt{L}}{2}(\tau^{2-\alpha}+h^4),\quad 0 \leq n \leq N.
      \end{equation}
    \end{remark}
\subsection{Stability}
    \vskip 0.2mm
    In the following, we consider the stability of the compact difference scheme (\ref*{eqn3.10})-(\ref*{eqn3.13}). Now supposing some perturbation terms are added into (\ref*{eqn3.10})-(\ref*{eqn3.13}), we get the following system of equations
    \begin{equation}\label{eqn4.16}
      \begin{split}
        &\frac{\tau^{-1}}{\Gamma(1-\alpha)}\mathcal{D}_t^{(\alpha)}( \mu_1 \delta_t \tilde{u}_i^{n-\frac{1}{2}}+\mu_2  \tilde{u}_i^{n-\frac{1}{2}})+\psi(\tilde{u}_i^{n-\frac{1}{2}},\tilde{u}_i^{n-\frac{1}{2}})-\frac{h^2}{2}\psi(\tilde{w}_i^{n-\frac{1}{2}},\tilde{u}_i^{n-\frac{1}{2}})\\
        &-\lambda \tilde{w}_i^{n-\frac{1}{2}}=f_i^{n-\frac{1}{2}},\quad 1\leq i \leq M-1,\quad 1\leq n\leq N,
      \end{split}
    \end{equation}
    \begin{align}
      \label{eqn4.17} &\tilde{w}_i^n=\delta_x^2 \tilde{u}_i^n-\frac{h^2}{12}\delta_x^2 \tilde{w}_i^n,\quad 1\leq i \leq M-1,\quad 0\leq n\leq N,\\
      \label{eqn4.18} &\tilde{u}_i^0=\varphi_1 (x_i)+r_1(x_i),\quad \tilde{v}_i^0=\varphi_2 (x_i)+r_2(x_i), \quad 0\leq i \leq M,\\
      \label{eqn4.19} &\tilde{u}_0^n=\tilde{u}_M^n=\tilde{w}_0^n=\tilde{w}_M^n=0,\quad 0\leq n\leq N.
    \end{align}
    Denote
    \begin{equation*}
      \beta_i^n=\tilde{u}_i^n-u_i^n,\quad \gamma_i^n=\tilde{w}_i^n
      -w_i^n,\quad \eta_i^0=\tilde{v}_i^0-v_i^0,\quad 0\leq i \leq M,\quad 0\leq n\leq N.
    \end{equation*}
    Substracting (\ref*{eqn3.10})-(\ref*{eqn3.13}) from (\ref*{eqn4.16})-(\ref*{eqn4.19}), respectively, we get the residual equations as follows
    \begin{equation}\label{eqn4.20}
      \begin{split}
        &\frac{\tau^{-1}}{\Gamma(1-\alpha)}\mathcal{D}_t^{(\alpha)}( \mu_1 \delta_t \beta_i^{n-\frac{1}{2}}+\mu_2  \beta_i^{n-\frac{1}{2}})+\left[\psi(\tilde{u}_i^{n-\frac{1}{2}},\tilde{u}_i^{n-\frac{1}{2}})-\psi(u_i^{n-\frac{1}{2}},u_i^{n-\frac{1}{2}})\right]\\
        &-\frac{h^2}{2}\left[\psi(\tilde{w}_i^{n-\frac{1}{2}},\tilde{u}_i^{n-\frac{1}{2}})-\psi(w_i^{n-\frac{1}{2}},u_i^{n-\frac{1}{2}})\right]-\lambda \gamma_i^{n-\frac{1}{2}}=f_i^{n-\frac{1}{2}},\\
        &\qquad \qquad 1\leq i \leq M-1,\quad 1\leq n\leq N,
      \end{split}
    \end{equation}
    \begin{align}
      \label{eqn4,21} &\gamma_i^n=\delta_x^2 \beta_i^n-\frac{h^2}{12}\delta_x^2 \gamma_i^n,\quad 1\leq i \leq M-1,\quad 0\leq n\leq N,\\
      \label{eqn4,22} &\beta_i^0=r_1(x_i),\quad \eta_i^0=r_2(x_i), \quad 0\leq i \leq M,\\
      \label{eqn4,23} &\beta_0^n=\beta_M^n=\gamma_0^n=\gamma_M^n=0,\quad 0\leq n\leq N.
    \end{align}
    \vskip 0.2mm
    For purpose of facilitating the following description of stability, we denote some coefficients as follows
    \begin{equation*}
      \begin{split}
        &c_{11}:=\frac{T^{1-\alpha}}{2\Gamma(2-\alpha)},\quad c_{12}:=\frac{c_{11}}{c_1\mu_1^2},\quad c_{13}:=\frac{c_{12}}{c_2\mu_1\lambda},\\
        & \hat{c}_3:=c_2\mu_1(c_0+\frac{c_0L}{\sqrt{6}}+\frac{\sqrt{L}}{2}C^{\star}), \quad \hat{c}_6:=c_2\mu_1(2c_0+\sqrt{L}C^{\star}),\\
      \end{split}
    \end{equation*}
    \begin{equation*}
      \begin{split}
        &c_{14}:=\max\left\{ \frac{2\hat{c}_3^2 +c_4+2c_5^2+\frac{5\mu_2^2L^2}{6\mu_1^2}+L^2+1}{c_2\mu_1\lambda} ,\quad \frac{8\hat{c}_6^2+3c_7^2}{c_2\mu_1\lambda} +\frac{2}{3} ,\quad 1 \right\},\\
        &c_{15}:=\frac{2c_{2}\mu_1}{\lambda}+\frac{c_{2}\mu_2^2}{2\mu_1\lambda}, \\
      \end{split}
    \end{equation*}
    from which, $C^{\star}$ will be given in the following theorem.
    \begin{theorem}\label{Th4.2}
      Suppose $\{ \beta_i^n,\gamma_i^n \}$ is the solution of (\ref*{eqn4.20})-(\ref*{eqn4,23}), for any $0\leq n\leq N$, when $\tau c_{14} \leq 1/3$, the following inequality holds
      \begin{equation}\label{eqn4.24}
        |\beta^{n}|_1\leq e^{(6c_{14}T)}\left(\frac{21}{4}|\beta^0|_1^2+6c_{13}\| \mu_1 \eta^0+\mu_2\beta^0\|^2+6\tau \sum\limits_{n=0}^{N-1}c_{15}\|f^{n+\frac{1}{2}}\|^2 \right)^{1/2}.
      \end{equation}
    \end{theorem}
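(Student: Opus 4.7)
The plan is to mirror the three-step argument of Theorem~\ref{Th4.1}, treating (\ref{eqn4.20})--(\ref{eqn4,23}) as a perturbed analog of the error equations, with $\beta,\gamma,\eta,f$ in place of $e,\rho,\sigma,P$. The structural differences to anticipate are three: (a) the initial data $\beta^0,\eta^0$ are no longer zero, so the endpoint term $-\frac{T^{1-\alpha}}{2\tau\Gamma(2-\alpha)}\|\mu_1\eta^0+\mu_2\beta^0\|^2$ coming from Lemma~\ref{lem4.2} must be kept and produces the $c_{13}\|\mu_1\eta^0+\mu_2\beta^0\|^2$ contribution in (\ref{eqn4.24}); (b) relation (\ref{eqn4,21}) carries no $Q^n$ truncation, which simplifies the $\tilde\Lambda_4$ estimate substantially; and (c) the inductive bound $|e^n|_1\leq 1$ used in Theorem~\ref{Th4.1} has no analog here, and must be replaced by a bootstrap hypothesis $|\beta^n|_1\leq C^{\star}$ where $C^{\star}$ is the right-hand side of (\ref{eqn4.24}).

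First I would take the inner product of (\ref{eqn4.20}) with $\mu_1\delta_t\beta^{n-\frac{1}{2}}+\mu_2\beta^{n-\frac{1}{2}}$, sum over $n=1,\dots,N$, and define $\tilde\Lambda_1,\dots,\tilde\Lambda_5$ in exact analogy with the proof of Theorem~\ref{Th4.1}. Applying Lemma~\ref{lem4.2} to $\tilde\Lambda_1$ while retaining its initial term, then repeating the elementary steps that took (\ref{eqn4.11}) into (\ref{eqn4.14}), leads to
\[
\sum_{n=1}^N\!\Bigl(\|\delta_t\beta^{n-\frac{1}{2}}\|^2 - c_2\tilde\Lambda_4^{n-\frac{1}{2}}\Bigr) \leq \sum_{n=1}^N\!\Bigl(\tfrac{2\mu_2}{\mu_1}\|\beta^{n-\frac{1}{2}}\|\|\delta_t\beta^{n-\frac{1}{2}}\|+c_2\tilde\Lambda_2^{n-\frac{1}{2}}+c_2\tilde\Lambda_3^{n-\frac{1}{2}}+c_2\tilde\Lambda_5^{n-\frac{1}{2}}\Bigr) + \tfrac{c_{12}}{\tau}\|\mu_1\eta^0+\mu_2\beta^0\|^2.
\]
I would then bound $\tilde\Lambda_2,\tilde\Lambda_3$ exactly as in parts (I)--(II) of Theorem~\ref{Th4.1}, splitting $\psi(\tilde u,\tilde u)-\psi(u,u)$ and $\psi(\tilde w,\tilde u)-\psi(w,u)$ by bilinearity and invoking Lemmas~\ref{lemma2.4}, \ref{lem4.3} and~\ref{lemma2.1}; the inductive hypothesis $\|\beta^n\|_\infty\leq\frac{\sqrt{L}}{2}C^{\star}$ is used in place of $\|e^n\|_\infty\leq\frac{\sqrt{L}}{2}$, which is precisely the substitution that turns $c_3,c_6$ into $\hat c_3,\hat c_6$. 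The term $\tilde\Lambda_4$ is handled by Lemmas~\ref{lemma2.5} and~\ref{lemmma2.6} with $R\equiv 0$, producing the telescoping quantity $\tilde H^n:=|\beta^n|_1^2+\frac{h^2}{12}\|\gamma^n\|^2-\frac{h^4}{144}|\gamma^n|_1^2$ with no $Q$-contribution; Cauchy--Schwarz and Young applied to $\tilde\Lambda_5$ yield the $c_{15}\|f^{n-\frac{1}{2}}\|^2$ term.

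Assembling the bounds, absorbing $\tilde H^N$ with the smallness condition $\tau c_{14}\leq 1/3$, and rearranging, I expect
\[
\tilde H^N \leq 3\tilde H^0 + 6c_{13}\|\mu_1\eta^0+\mu_2\beta^0\|^2 + 6\tau\sum_{n=0}^{N-1}c_{15}\|f^{n+\frac{1}{2}}\|^2 + 12\tau c_{14}\sum_{n=0}^{N-1}\tilde H^n.
\]
To control $\tilde H^0$ I would use (\ref{eqn4,21}) at $n=0$: testing $\gamma^0=\delta_x^2\beta^0-\frac{h^2}{12}\delta_x^2\gamma^0$ against $\gamma^0$ and applying the inverse inequality $|\gamma^0|_1\leq\frac{2}{h}\|\gamma^0\|$ from Lemma~\ref{lemma2.1} gives $\|\gamma^0\|\leq\frac{3}{h}|\beta^0|_1$, whence $\tilde H^0\leq\frac{7}{4}|\beta^0|_1^2$, accounting for the factor $\frac{21}{4}$ in (\ref{eqn4.24}). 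The discrete Gr\"onwall inequality (Lemma~\ref{lem4.4}) together with the lower bound $\tilde H^N\geq|\beta^N|_1^2$ (obtained by combining $|\gamma^N|_1\leq\frac{2}{h}\|\gamma^N\|$ with the definition of $\tilde H^N$) then delivers (\ref{eqn4.24}) after taking square roots.

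The main obstacle is closing the bootstrap loop: the constants $\hat c_3,\hat c_6$, and therefore $c_{14}$, depend on the a~priori bound $C^{\star}$, which is itself the quantity being estimated. One must therefore fix $C^{\star}$ to equal the right-hand side of (\ref{eqn4.24}) at the outset, check that the resulting $c_{14}$ is a data-only constant independent of $n$ and $N$, and then run the induction so that the closed bound at step $N$ does not exceed the bound assumed at steps $0,\dots,N-1$; the hypothesis $\tau c_{14}\leq 1/3$ is precisely the structural requirement that renders this self-consistent.
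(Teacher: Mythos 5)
Your proposal follows essentially the same route as the paper's own proof: the same inner product with $\mu_1\delta_t\beta^{n-\frac{1}{2}}+\mu_2\beta^{n-\frac{1}{2}}$, Lemma~\ref{lem4.2} with the retained initial term giving the $c_{13}$ contribution, the estimates of the five terms with $C^{\star}$ replacing the bound $1$ (hence $\hat c_3,\hat c_6$), the energy quantity $\tilde H^n$, the bound $\|\gamma^0\|\leq\frac{3}{h}|\beta^0|_1$ yielding $\tilde H^0\leq\frac{7}{4}|\beta^0|_1^2$ and the factor $\frac{21}{4}$, and discrete Gr\"onwall. Your closing remark about the self-referential role of $C^{\star}$ is a fair point of care that the paper itself handles only implicitly (it simply posits $C^{\star}$ in the induction hypothesis), but it does not change the argument.
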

    \begin{proof}
      It is obvious that (\ref*{eqn4.24}) holds for $n=0$.
      \vskip 0.2mm
      Assume (\ref*{eqn4.24}) is vaild for $1\leq n\leq N-1$. $\exists C^{\star} >0$, such that $|\beta^n|_1 \leq C^{\star}$ $(1\leq n\leq N-1)$, combining with Lemma \ref*{lemma2.1}, we can arrive at
      \begin{equation}\label{eqn4.25}
       \|\beta^n\|\leq\frac{L}{\sqrt{6}}C^{\star}, \quad \|\beta^n\|_{\infty}\leq \frac{\sqrt{L}}{2}C^{\star},\quad 1\leq n\leq N-1.
      \end{equation}
      Below we need to prove that (\ref*{eqn4.24}) is valid for $n=N$. Similar to the proof of convergence, taking the inner product of (\ref*{eqn4.24}) with $(\mu_1 \delta_t \beta^{n-\frac{1}{2}}+\mu_2  \beta^{n-\frac{1}{2}})$ and summing for $n$ from $1$ to $N$, then we have
      \begin{equation}\label{eqn4.26}
        \Pi_1^{n-\frac{1}{2}}-\sum\limits_{n=1}^{N}\Pi_2^{n-\frac{1}{2}}-\sum\limits_{n=1}^{N}\Pi_3^{n-\frac{1}{2}}-\sum\limits_{n=1}^{N}\Pi_4^{n-\frac{1}{2}}=\sum\limits_{n=1}^{N}\Pi_5^{n-\frac{1}{2}},
      \end{equation}
      where
      \begin{equation*}
        \begin{split}
          &\Pi_1^{n-\frac{1}{2}}:=\sum\limits_{n=1}^{N}\left\langle \frac{\tau^{-1}}{\Gamma(1-\alpha)}\mathcal{D}_t^{(\alpha)}( \mu_1 \delta_t \beta^{n-\frac{1}{2}}+\mu_2  \beta^{n-\frac{1}{2}}),\mu_1 \delta_t \beta^{n-\frac{1}{2}}+\mu_2  \beta^{n-\frac{1}{2}}\right\rangle,\\
          &\Pi_2^{n-\frac{1}{2}}:=-\left\langle \psi(\tilde{u}^{n-\frac{1}{2}},\tilde{u}^{n-\frac{1}{2}})-\psi(u^{n-\frac{1}{2}},u^{n-\frac{1}{2}}),\mu_1 \delta_t \beta^{n-\frac{1}{2}}+\mu_2  \beta^{n-\frac{1}{2}} \right\rangle,\\
          &\Pi_3^{n-\frac{1}{2}}:=\frac{h^2}{2}\left\langle \psi(\tilde{w}^{n-\frac{1}{2}},\tilde{u}^{n-\frac{1}{2}})-\psi(w^{n-\frac{1}{2}},u^{n-\frac{1}{2}}),\mu_1 \delta_t \beta^{n-\frac{1}{2}}+\mu_2  \beta^{n-\frac{1}{2}} \right\rangle,\\
          &\Pi_4^{n-\frac{1}{2}}:=\lambda\left\langle \gamma^{n-\frac{1}{2}},\mu_1 \delta_t \beta^{n-\frac{1}{2}}+\mu_2  \beta^{n-\frac{1}{2}} \right\rangle,\\
          &\Pi_5^{n-\frac{1}{2}}:=\left\langle f^{n-\frac{1}{2}},\mu_1 \delta_t \beta^{n-\frac{1}{2}}+\mu_2  \beta^{n-\frac{1}{2}} \right\rangle.\\
        \end{split}
      \end{equation*}
      First, considering $\Pi_1^{n-\frac{1}{2}}$, combining with Lemma \ref*{lem4.2} and (\ref*{eqn4,22}), we have 
      \begin{equation*}
        \begin{split}
          \Pi_1^{n-\frac{1}{2}} &\geq \sum\limits_{n=1}^{N} \frac{T^{-\alpha}}{2\Gamma(1-\alpha)}\| \mu_1 \delta_t \beta^{n-\frac{1}{2}}+\mu_2  \beta^{n-\frac{1}{2}}\|^2-\frac{T^{1-\alpha}}{2\tau \Gamma(2-\alpha)}\| \mu_1 \eta^0+\mu_2  \beta^0\|^2.\\
        \end{split}
      \end{equation*}
      Therefore, we have
      \begin{equation*}
        \begin{split}
          &\sum\limits_{n=1}^{N} c_1\| \mu_1 \delta_t \beta^{n-\frac{1}{2}}+\mu_2  \beta^{n-\frac{1}{2}}\|^2-c_{11}\tau^{-1}\| \mu_1 \eta^0+\mu_2  \beta^0\|^2-\sum\limits_{n=1}^{N}\Pi_4^{n-\frac{1}{2}}\\
          &\leq \sum\limits_{n=1}^{N}\Pi_2^{n-\frac{1}{2}}+\sum\limits_{n=1}^{N}\Pi_3^{n-\frac{1}{2}}+\sum\limits_{n=1}^{N}\Pi_5^{n-\frac{1}{2}},\\
        \end{split}
      \end{equation*}
      furthermore, the above inequality is transformed into
      \begin{equation}\label{eqn4.27}
        \begin{split}
          \sum\limits_{n=1}^{N} \biggl\{ \|\delta_t \beta^{n-\frac{1}{2}}\|^2-c_2\Pi_4^{n-\frac{1}{2}} \biggr\}\leq& \sum\limits_{n=1}^{N}\biggl\{ c_2\Pi_2^{n-\frac{1}{2}}+c_2\Pi_3^{n-\frac{1}{2}}+c_2\Pi_5^{n-\frac{1}{2}}+\frac{2\mu_2}{\mu_1}\| \delta_t\beta^{n-\frac{1}{2}}\|\| \beta^{n-\frac{1}{2}}\|\biggr\} \\
          &+c_{12}\tau^{-1}\| \mu_1 \eta^0+\mu_2  \beta^0\|^2.\\
        \end{split}
      \end{equation}
    Similarly, we analyze $c_2\Pi_2^{n-\frac{1}{2}},c_2\Pi_3^{n-\frac{1}{2}},c_2\Pi_4^{n-\frac{1}{2}},c_2\Pi_5^{n-\frac{1}{2}} $ and $\frac{2\mu_2}{\mu_1}\|\beta^{n-\frac{1}{2}}\|\| \delta_t\beta^{n-\frac{1}{2}}\|$, respectively. Since the process of analysis is the same as \textbf{(\Rmnum{1})}-\textbf{(\Rmnum{4})}, we omit the detail of analysis process and here give their corresponding estimates directly
    \begin{align}
      \label{eqn4.28} &c_2\Pi_2^{n-\frac{1}{2}} \leq \frac{1}{5}\|\delta_t \beta^{n-\frac{1}{2}}\|^2+(\frac{5\hat{c}_3^2}{4}+c_4)|\beta^{n-\frac{1}{2}}|_1^2,\\
      \label{eqn4.29} &c_2\Pi_3^{n-\frac{1}{2}} \leq \frac{2}{5}\|\delta_t \beta^{n-\frac{1}{2}}\|^2+(\frac{5}{4}c_5^2+\frac{1}{2})|\beta^{n-\frac{1}{2}}|_1^2+(\frac{5}{4}\hat{c}_6^2+\frac{1}{2}c_7^2)h^2\|\gamma^{n-\frac{1}{2}}\|^2,\\
      \label{eqn4.30} -&c_2\Pi_4^{n-\frac{1}{2}} \geq \frac{c_2\mu_1\lambda}{2\tau}\left[(|\beta^n|_1^2-|\beta^{n-1}|_1^2)+\frac{h^2}{12}(\|\gamma^n\|^2-\|\gamma^{n-1}\|^2)-\frac{h^4}{144}(|\gamma^n|_1^2-|\gamma^{n-1}|_1^2)\right],\\
      \label{eqn4.31}&c_2\Pi_5^{n-\frac{1}{2}} \leq \frac{1}{5}\|\delta_t \beta^{n-\frac{1}{2}}\|^2+(\frac{5}{4}c_2^2\mu_1^2+\frac{1}{2}c_2^2\mu_2^2)\|f^{n-\frac{1}{2}}\|^2 +\frac{L^2}{12}|\beta^{n-\frac{1}{2}}|_1^2,\\
      \label{eqn4.32} & \frac{2\mu_2}{\mu_1}\|\beta^{n-\frac{1}{2}}\|\| \delta_t\beta^{n-\frac{1}{2}}\| \leq \frac{1}{5}\| \delta_t\beta^{n-\frac{1}{2}}\|+\frac{5\mu_2^2L^2}{6\mu_1^2}|\beta^{n-\frac{1}{2}}|_1^2.
    \end{align} 
    \vskip 0.2mm
    Now substituting (\ref*{eqn4.28})-(\ref*{eqn4.32}) back into (\ref*{eqn4.27}), we can get
    \begin{equation*}
      \begin{split}
        &\sum\limits_{n=1}^{N}\biggl\{\|\delta_t \beta^{n-\frac{1}{2}}\|^2+\frac{c_2\mu_1\lambda}{2\tau}\left[(|\beta^n|_1^2-|\beta^{n-1}|_1^2)+\frac{h^2}{12}(\|\gamma^n\|^2-\|\gamma^{n-1}\|^2)-\frac{h^4}{144}(|\gamma^n|_1^2-|\gamma^{n-1}|_1^2)\right]\biggr\}\\
        \leq &\sum\limits_{n=1}^{N}\biggl\{ \|\delta_t \beta^{n-\frac{1}{2}}\|^2+\left( \frac{5}{4}\hat{c}_3^2 +c_4+\frac{5}{4}c_5^2+\frac{L^2}{12}+\frac{5\mu_2^2L^2}{6\mu_1^2}+\frac{1}{2} \right)|\beta^{n-\frac{1}{2}}|_1^2+\left( \frac{15}{2}\hat{c}_6^2+3c_7^2 \right)\frac{h^2}{6}\| \gamma^{n-\frac{1}{2}}\|^2\\
        &+\left( \frac{5}{4}c_2^2\mu_1^2+\frac{1}{2}c_2^2\mu_2^2 \right)\|f^{n-\frac{1}{2}}\|^2\biggr\}+c_{12}\tau^{-1}\| \mu_1 \eta^0+\mu_2  \beta^0\|^2.\\
      \end{split}
    \end{equation*}
    Moreover, we have
    \begin{equation*}
      \begin{split}
        &\sum\limits_{n=1}^{N}\frac{1}{2\tau}\left[(|\beta^n|_1^2-|\beta^{n-1}|_1^2)+\frac{h^2}{12}(\|\gamma^n\|^2-\|\gamma^{n-1}\|^2)-\frac{h^4}{144}(|\gamma^n|_1^2-|\gamma^{n-1}|_1^2)\right]\\
        \leq &\sum\limits_{n=1}^{N}\biggl\{ \Big( \frac{2\hat{c}_3^2 +c_4+2c_5^2+\frac{5\mu_2^2L^2}{6\mu_1^2}+L^2+1}{c_2\mu_1\lambda} \Big)|\beta^{n-\frac{1}{2}}|_1^2+\Big( \frac{8\hat{c}_6^2+3c_7^2}{c_2\mu_1\lambda} \Big)\frac{h^2}{6}\| \gamma^{n-\frac{1}{2}}\|^2\biggr\}\\
        &+\sum\limits_{n=1}^{N} \Big( \frac{2c_2\mu_1}{\lambda}+\frac{c_2\mu_2^2}{2\mu_1\lambda} \Big)\|f^{n-\frac{1}{2}}\|^2+\frac{c_{12}}{c_2\mu_1 \lambda}\tau^{-1}\| \mu_1 \eta^0+\mu_2  \beta^0\|^2\\
        \leq &\sum\limits_{n=1}^{N}\biggl\{\Big( \frac{2\hat{c}_3^2 +c_4+2c_5^2+\frac{5\mu_2^2L^2}{6\mu_1^2}+L^2+1}{c_2\mu_1\lambda} \Big)|\beta^{n-\frac{1}{2}}|_1^2+\left( \frac{8\hat{c}_6^2+3c_7^2}{c_2\mu_1\lambda} +\frac{2}{3}\right)\frac{h^2}{6}\| \gamma^{n-\frac{1}{2}}\|^2-\frac{h^4}{36}| \gamma^{n-\frac{1}{2}}|_1^2\biggr\}\\
        &+\sum\limits_{n=1}^{N} c_{15}\|f^{n-\frac{1}{2}}\|^2+c_{13}\tau^{-1}\| \mu_1 \eta^0+\mu_2  \beta^0\|^2 \\
      \end{split}
    \end{equation*}
    \begin{equation*}
      \begin{split}
        \leq& \sum\limits_{n=1}^{N}c_{14}\left[ (|\beta^n|_1^2+|\beta^{n-1}|_1^2)+\frac{h^2}{12}(\|\gamma^n\|^2+\|\gamma^{n-1}\|^2)-\frac{h^4}{144}(|\gamma^n|_1^2+|\gamma^{n-1}|_1^2) \right]\\
        &+\sum\limits_{n=1}^{N}c_{15}\|f^{n-\frac{1}{2}}\|^2+c_{13}\tau^{-1}\| \mu_1 \eta^0+\mu_2  \beta^0\|^2.\\
      \end{split}
    \end{equation*}
    \vskip 0.2mm
    Define
    \begin{equation*}
      \tilde{H}^n:=|\beta^n|_1^2+\frac{h^2}{12}\|\gamma^n\|^2-\frac{h^4}{144}|\gamma^n|_1^2, \quad 0 \leq n\leq N.
    \end{equation*}
    It is easy to konw $\tilde{H}^0 \geq |\beta^0|_1^2+\frac{h^2}{18}\|\gamma^0\|^2>0$, we can get
    \begin{equation*}
      \begin{array}{ccc}
       \frac{1}{2\tau}(\tilde{H}^N-\tilde{H}^0) \leq c_{14}\tilde{H}^{N}+2c_{14}\sum\limits_{n=0}^{N-1} \tilde{H}^n+\sum\limits_{n=1}^{N} c_{15}\|f^{n-\frac{1}{2}}\|^2+ c_{13}\tau^{-1}\| \mu_1 \eta^0+\mu_2  \beta^0\|^2,
      \end{array}
    \end{equation*}
    when $\tau c_{14} \leq 1/3$, then
    \begin{equation*}
      \begin{array}{ccc}
       \tilde{H}^N \leq 3\tilde{H}^0+ 6 c_{13}\| \mu_1 \eta^0+\mu_2  \beta^0\|^2+6\tau\sum\limits_{n=0}^{N-1}c_{15}\|f^{n+\frac{1}{2}}\|^2+\sum\limits_{n=0}^{N-1} 12\tau c_{14}\tilde{H}^n.
      \end{array}
    \end{equation*}
    Combining with Lemma \ref*{lem4.4}, we can get
    \begin{equation*}
      \begin{split}
        \tilde{H}^{N}\leq e^{(12c_{14}T)}\left(3\tilde{H}^0+6c_{13}\| \mu_1 \eta^0+\mu_2 \beta^0\|^2+6\tau \sum\limits_{n=0}^{N-1}c_{15}\|f^{n+\frac{1}{2}}\|^2 \right).
      \end{split}
    \end{equation*}
    Next, taking the inner product of \eqref{eqn4,21} (case $n=0$) with $\gamma^0$ and utilizing the inverse estimate, then we get
    $$\|\gamma^0 \|^2 \leq \frac{2}{h}|\beta^0|_1\| \gamma^0\|+\frac{1}{3}\|\gamma^0 \|^2,$$
    thus
    \begin{equation*}
      \begin{split}
         \|\gamma^0\| \leq \frac{3}{h} | \beta^0|_1,
      \end{split}
    \end{equation*} 
    and it is easy to yield 
    \begin{equation*}
      \tilde{H}^{N} \geq |\beta^{N}|_1^2+\frac{h^2}{18}\|\gamma^{N}\|^2 \geq |\beta^{N}|_1^2.
    \end{equation*}
    Then we can get
    \begin{equation*}
      |\beta^{N}|_1\leq e^{(6c_{14}T)}\left(\frac{21}{4}|\beta^0|_1^2+6c_{13}\| \mu_1 \eta^0+\mu_2 \beta^0\|^2+6\tau\sum\limits_{n=0}^{N-1}c_{15}\|f^{n+\frac{1}{2}}\|^2  \right)^{1/2}.
    \end{equation*}
    This completes the proof.
    \end{proof}
    \begin{remark}
      Under the conditions of Theorem \ref*{Th4.2}, combining with (\ref*{eq2.1}), for $0 \leq n \leq N$, we can obtain
      \begin{equation*}
       \begin{split}
        &\|\beta^{n}\|\leq \frac{L}{\sqrt{6}}e^{(6c_{14}T)}\left(\frac{21}{4}|\beta^0|_1^2+6c_{13}\| \mu_1 \eta^0+\mu_2 \beta^0\|^2+6\tau\sum\limits_{n=0}^{N-1}c_{15}\|f^{n+\frac{1}{2}}\|^2  \right)^{1/2}, \\
        &|\beta^{n}|_{\infty}\leq \frac{\sqrt{L}}{2}e^{(6c_{14}T)}\left(\frac{21}{4}|\beta^0|_1^2+6c_{13}\| \mu_1 \eta^0+\mu_2 \beta^0\|^2+6\tau\sum\limits_{n=0}^{N-1}c_{15}\|f^{n+\frac{1}{2}}\|^2  \right)^{1/2}.
       \end{split}
      \end{equation*}
    
    \end{remark}



        

  \section{Numerical experiment}

     \vskip 0.2mm
         In this section, all experiments will be performed by utilizing the software MATLAB R2021a on a MacOS 12.5 (64 bit) PC-Inter(R) Core(TM) i5 CPU 1.4 GHz and 8.0 GB of RAM. Three illustrative examples are given to demonstrate the efficiency and numerical accuracy of our scheme.

     \vskip 0.2mm
     Let $U_i^n$ and $u_i^n$ ($0 \leq i \leq M$, $0\leq n \leq N$) be the exact solutions and numerical solutions respectively, and we compute the problem (\ref*{eq1.1})-(\ref*{eq1.3}) by using the compact difference scheme (\ref*{eqn3.10})-(\ref*{eqn3.13}). In the implementation of the implicit scheme, we apply the method of fixed point iteration \cite{zhangqf}. 
     \vskip 0.2mm
     Denote
     \begin{equation*}
     \begin{array}{cc}
     E_{\infty}(\tau,h):= \max\limits_{ 0\leq n \leq N }\| U^n-u^n \|_{\infty}, \\
     Order^\tau:=\log_2\Big(\frac{E_{\infty}(2\tau,h)}{E_{\infty}(\tau,h)}\Big),\qquad
     Order^h:=\log_2\Big(\frac{E_{\infty}(\tau,2h)}{E_{\infty}(\tau,h)}\Big).
     \end{array}
     \end{equation*}
     \vskip 0.2mm
     In the following the fixed point iteration is applied to the implicit scheme (\ref*{eqn3.10})-(\ref*{eqn3.13}). It can be seen from (\ref*{eqn3.10})-(\ref*{eqn3.11}) that
     \begin{equation*}
      \left(I+\frac{h^2}{12}\delta_x^2 \right) \left( \frac{1}{\theta }\mathcal{D}_t^{(\alpha)}( \mu_1 \delta_t u_i^{n-\frac{1}{2}}+\mu_2  u_i^{n-\frac{1}{2}})+\psi(u_i^{n-\frac{1}{2}},u_i^{n-\frac{1}{2}})-\frac{h^2}{2}\psi(w_i^{n-\frac{1}{2}},u_i^{n-\frac{1}{2}})\right)
      =\lambda \delta_x^2 u_i^{n-\frac{1}{2}},
     \end{equation*}
     where $I$ denotes the identity operator, $\theta :=\tau^{\alpha}\Gamma(1-\alpha)$.
     Let $\xi _1:=\mu_1/\tau +\mu_2/2$ and $\xi_2:=\mu_2/2-\mu_1/\tau$, then we have 
     $$ \mu_1 \delta_t u_i^{n-\frac{1}{2}}+\mu_2  u_i^{n-\frac{1}{2}}=\xi_1u_i^n+\xi_2u_i^{n-1}.$$
     Next, we analyze the term $\psi(u_i^{n-\frac{1}{2}},u_i^{n-\frac{1}{2}}) $, that is
     $$\psi(u_i^{n-\frac{1}{2}},u_i^{n-\frac{1}{2}})=\frac{1}{4}\left( \psi(u_i^{n},u_i^{n})+\psi(u_i^{n},u_i^{n-1})+\psi(u_i^{n-1},u_i^{n})+\psi(u_i^{n-1},u_i^{n-1})\right). $$
     In addition, the expansion of $\psi(w_i^{n-\frac{1}{2}},u_i^{n-\frac{1}{2}})$ is similar to that of $\psi(u_i^{n-\frac{1}{2}},u_i^{n-\frac{1}{2}})$. Therefore, we can obtain
    \begin{equation*}
      \begin{cases}
          \left(I+\frac{h^2}{12}\delta_x^2 \right) \left( \mathcal{D}_t^{(\alpha)}( \xi_1 u_i^{n,k+1}+\xi_2 u_i^{n-1})+\theta \psi^{k+\frac{1}{2}}(u_i^{n-\frac{1}{2}},u_i^{n-\frac{1}{2}})-\frac{\theta h^2}{2}\psi(w_i^{n-\frac{1}{2},k},u_i^{n-\frac{1}{2},k})\right)\\
      =\frac{\theta\lambda}{2} \delta_x^2 (u_i^{n,k+1}+u_i^{n-1}),\\ 
      \\
      \psi^{k+\frac{1}{2}}(u_i^{n-\frac{1}{2}},u_i^{n-\frac{1}{2}})=\frac{1}{4}\left( \psi^{k+\frac{1}{2}}(u_i^{n},u_i^{n})+\psi(u_i^{n,k+1},u_i^{n-1})+\psi(u_i^{n-1},u_i^{n,k+1})+\psi(u_i^{n-1},u_i^{n-1})\right),\\
       \\
      \psi(w_i^{n-\frac{1}{2},k},u_i^{n-\frac{1}{2},k})=\frac{1}{4}\left( \psi(w_i^{n,k},u_i^{n,k})+\psi(w_i^{n,k},u_i^{n-1})+\psi(w_i^{n-1},u_i^{n,k})+\psi(w_i^{n-1},u_i^{n-1})\right),\\
       \\
      \psi^{k+\frac{1}{2}}(u_i^{n},u_i^{n})=\frac{u_{i-1}^{n,k}+u_{i}^{n,k}+u_{i+1}^{n,k}}{6h}(u_{i+1}^{n,k+1}-u_{i-1}^{n,k+1}),\\ 
       \\
      \left(I+\frac{h^2}{12}\delta_x^2 \right)w_i^{n,k}=\delta_x^2 u_i^{n,k},
    \end{cases}
  \end{equation*}
     where $1\leq i \leq M-1$, $1\leq n \leq N$, $k$ denotes the iteration times, the initial guess is given by $u_i^{n,0}=u_i^{n-1}$, and the stopping criterion is set as $\|u^{n,k+1}-u^{n,k} \|_{\infty} \leq 10^{-8}$.

     \vskip 0.2mm
     \textbf{Example 1.} In the first example, considering (\ref*{eq1.1})-(\ref*{eq1.3}) with $L=T=1$, we give the following exact solution
     $$u(x,t)=(t^{2+\alpha}+1)\sin(\pi x), $$
     then the initial conditions are $\psi_1(x)=\sin(\pi x)$ and $\psi_2(x)=0$, and the source term is
     	$$f(x,t)=\Gamma(3+\alpha)t\sin(\pi x)\left(\mu_1+\frac{\mu_2t}{2}\right)+\frac{\pi}{2}\sin(2\pi x)(t^{2+\alpha}+1)^2+\lambda\pi^2\sin(\pi x)(t^{2+\alpha}+1). $$

     \vskip 0.2mm
     Tables 1 and 2 present the $L^{\infty}$-norm errors and corresponding convergence orders for compact difference schemes (\ref*{eqn3.10})-(\ref*{eqn3.13}). As we can see, all numerical results are consistent with the theoretical results. In temporal and spatial directions, it can approximately arrive at order $(2-\alpha)$ and fourth order, respectively. Figure \ref{fig1} shows the temporal convergence order for different values $\alpha$ when $M=80$, $\mu_1=\mu_2=1$, and $\lambda=1$. The spatial convergence with fixed time step and coefficients ($\mu_1,\mu_2,\lambda$) is showed in Figure \ref{fig2}, when varying spatial step sizes for different $\alpha$.

 \begin{table} \label{tb1}
       \center
     	\caption{The $L^{\infty}$-norm errors and temporal convergence orders in viscous coefficients $\lambda=1$ and $0.01$, respectively.}
       \resizebox{\textwidth}{!}{    %
     	\begin{tabular}{cccccccccccccc} 
     		\toprule
         \multicolumn{14}{c}{Compact difference scheme (\ref*{eqn3.10})-(\ref*{eqn3.13}) with $M=80$, $\mu_1=\mu_2=1$}\\
        \midrule
        \multicolumn{7}{c}{$\lambda=1$} & \multicolumn{7}{c}{$\lambda=0.01$} \\
        \cmidrule{2-7}  \cmidrule{9-14}
          \multirow{2}{*}{\centering $N$} & \multicolumn{2}{c}{$\alpha=0.25$} & \multicolumn{2}{c}{$\alpha=0.5$} & \multicolumn{2}{c}{$\alpha=0.75$} & \multirow{2}{*}{\centering $N$}  & \multicolumn{2}{c}{$\alpha=0.25$} & \multicolumn{2}{c}{$\alpha=0.5$} & \multicolumn{2}{c}{$\alpha=0.75$} \\
        \cmidrule{2-3}  \cmidrule{4-5}  \cmidrule{6-7}  \cmidrule{9-10}  \cmidrule{11-12}  \cmidrule{13-14}
         & $E_{\infty}(\tau,h)$ & $Order^{\tau}$ & $E_{\infty}(\tau,h)$ & $Order^{\tau}$ & $E_{\infty}(\tau,h)$ & $Order^{\tau}$ &  &  $E_{\infty}(\tau,h)$ & $Order^{\tau}$ & $E_{\infty}(\tau,h)$ & $Order^{\tau}$ & $E_{\infty}(\tau,h)$ & $Order^{\tau}$   \\
         \midrule
         64   & 5.043e-05 & *    &2.917e-04 & *    &1.871e-03 &*     &  64  &5.219e-04  &*     &2.435e-03 &*    &7.983e-03 &* \\
         128  & 1.586e-05 &1.669 &1.667e-04 &1.478 &7.904e-04 &1.243 &  128 &1.687e-04  &1.630 &8.830e-04 &1.457 &3.384e-03 &1.234 \\   
     		 256  & 4.938e-06 &1.683 &3.813e-05 & 1.484&3.332e-04 &1.246 &  256 &5.297e-05  &1.677 &3.183e-04 &1.472 &1.430e-03 &1.243 \\	
         512  & 1.527e-06 &1.694 &1.357e-05 & 1.489&1.403e-04 &1.248 &  512 &1.597e-05  &1.730 &1.137e-04 &1.485 &6.030e-04 &1.246 \\    
         \bottomrule              
     	\end{tabular}}
   \end{table}

   \begin{table}\label{tb2}
    \center
    \caption{The $L^{\infty}$-norm errors and spatial convergence orders in viscous coefficients $\lambda=1$ and $0.01$, respectively.}
    \resizebox{\textwidth}{!}{
    \begin{tabular}{cccccccccccccc} 
      \toprule
      \multicolumn{14}{c}{Compact difference scheme (\ref*{eqn3.10})-(\ref*{eqn3.13}) with $N=20000$, $\mu_1=\mu_2=1$}\\
     \midrule
     \multicolumn{7}{c}{$\lambda=1$} & \multicolumn{7}{c}{$\lambda=0.01$} \\
     \cmidrule{2-7}  \cmidrule{9-14}
       \multirow{2}{*}{\centering $M$} & \multicolumn{2}{c}{$\alpha=0.25$} & \multicolumn{2}{c}{$\alpha=0.5$} & \multicolumn{2}{c}{$\alpha=0.75$} & \multirow{2}{*}{\centering $M$}  & \multicolumn{2}{c}{$\alpha=0.25$} & \multicolumn{2}{c}{$\alpha=0.5$} & \multicolumn{2}{c}{$\alpha=0.75$} \\
     \cmidrule{2-3}  \cmidrule{4-5}  \cmidrule{6-7}  \cmidrule{9-10}  \cmidrule{11-12}  \cmidrule{13-14}
      & $E_{\infty}(\tau,h)$ & $Order^{h}$ & $E_{\infty}(\tau,h)$ & $Order^{h}$ & $E_{\infty}(\tau,h)$ & $Order^{h}$ &  &  $E_{\infty}(\tau,h)$ & $Order^{h}$ & $E_{\infty}(\tau,h)$ & $Order^{h}$ & $E_{\infty}(\tau,h)$ & $Order^{h}$   \\
      \midrule
      4   & 9.565e-03 & *    &9.175e-03 & *    &8.763e-03 &*     &  4  &7.732e-02  &*     &5.880e-02 &*    &5.218e-02 &* \\
      8  & 6.348e-04 &3.913 &6.089e-04 &3.914 &5.836e-04 &3.909 &  8 &7.476e-03  &3.370 &6.898e-03 &3.091 &4.417e-03 &3.575 \\   
       16  & 4.156e-05 &3.933 &3.978e-05 & 3.947&3.897e-05 &3.904 &  16 &5.680e-04  &3.718 &4.522e-04 &3.931 &2.864e-04 &3.945 \\	
      32  & 2.614e-06 &3.991 &2.530e-06 & 3.975&1.403e-06 &3.946 &  32 &3.595e-05  &3.982 &2.798e-05 &4.015 &1.385e-05 &4.106 \\    
      \bottomrule              
    \end{tabular}}
\end{table}

\begin{figure}
 \centering
 \includegraphics[width=0.75\textwidth]{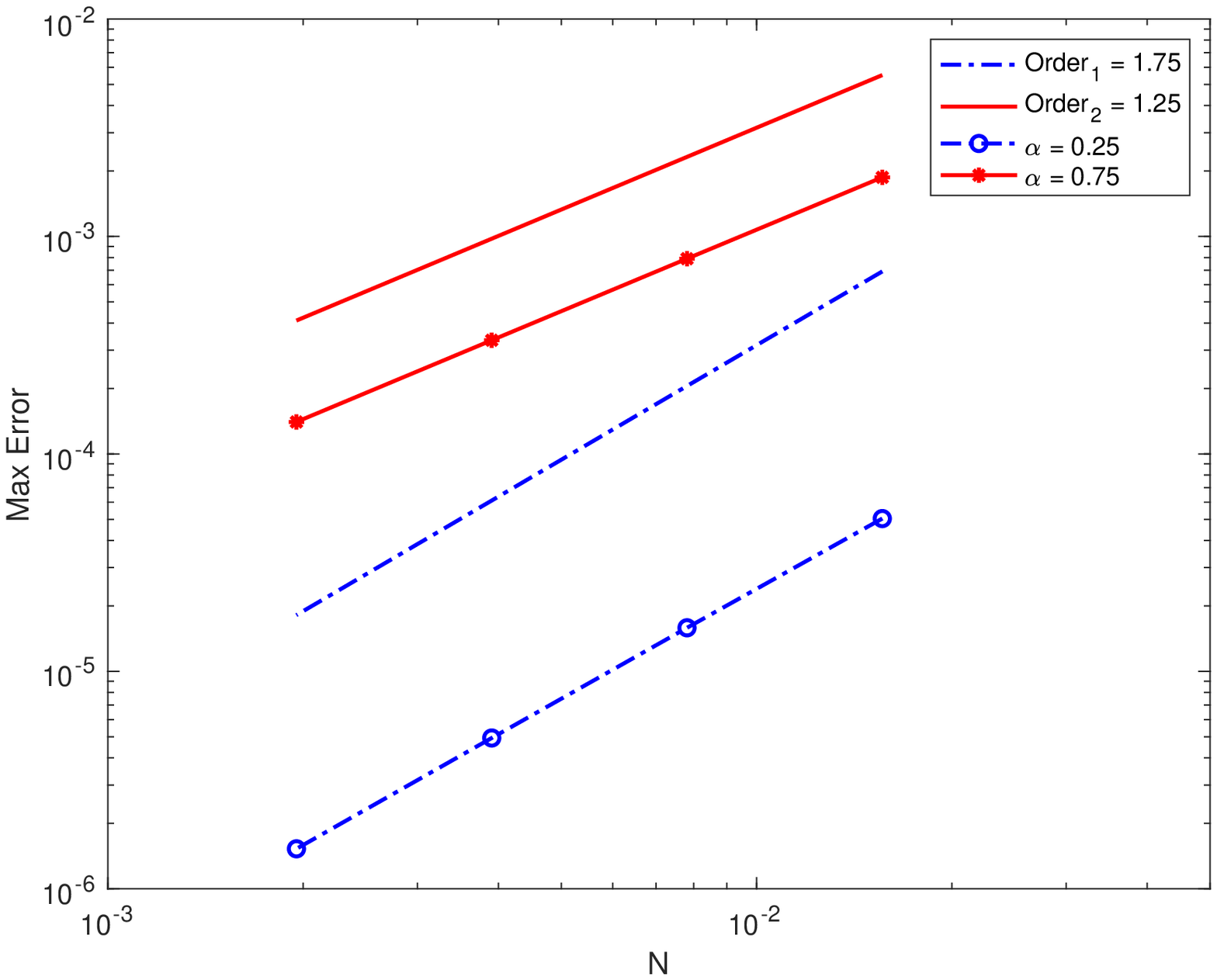}
\caption{Temporal convergence orders when $M=80$, $\mu_1=\mu_2=1$, and $\lambda=1$.}\label{fig1}
\end{figure}

\begin{figure}
    \centering
 \includegraphics[width=0.75\textwidth]{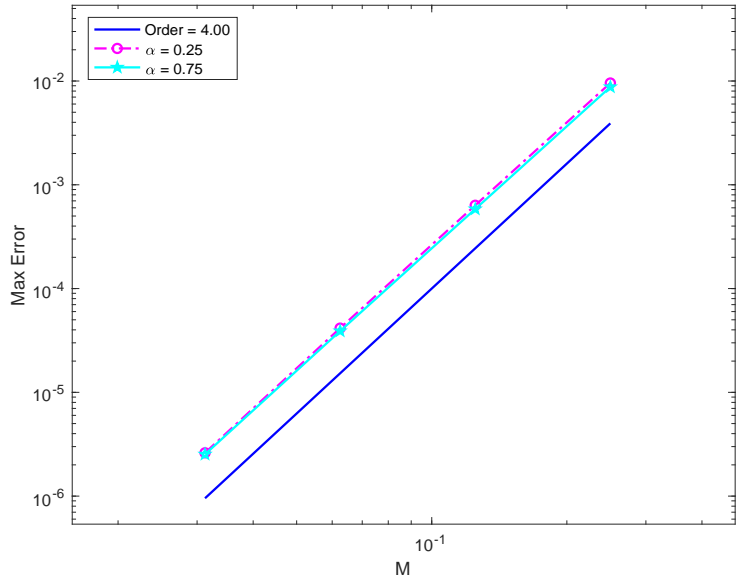}
 \caption{Spatial convergence orders when $N=20000$, $\mu_1=\mu_2=1$, and $\lambda=1$.}\label{fig2}
\end{figure}

\textbf{Example 2.} In the second example, we consider that the second-order derivative of time of the exact solution has weak singularity at zero. Although we see that $u(x,t)$ are supposed to belong to $C^2[0,T]$ in Lemma \ref*{lemma2.2}, the constructed compact difference scheme (\ref*{eqn3.10})-(\ref*{eqn3.13}) is still valid. Following, we
consider (\ref*{eq1.1}) at $\alpha=0.5$ and set $L=T=1$, and $\mu_1=\mu_2=1$, the exact solution is given as follows
$$u(x,t)=t^{\frac{3}{2}}\sin(2\pi x), $$
then the initial conditions are $\psi_1(x)=\psi_2(x)=0$, and the source term is
  $$f(x,t)=\frac{3\sqrt{\pi} \sin(2\pi x)}{4}\left( \mu_1+\mu_2 t\right)+\pi t^3 \sin(4\pi x)+4\lambda \pi^2t^{\frac{3}{2}}\sin(2\pi x) .$$
  \vskip 0.2mm
  In Tables 3 and 4, the numerical results show that the compact difference scheme (\ref*{eqn3.10})-(\ref*{eqn3.13}) is still applicable for some functions whose second-order derivative of time has weak singularity. The corresponding convergence orders are same as that in Example 1. In other words, the convergence orders are 4 for space and ($2-\alpha$) for time in $L^{\infty}$ norm. 

  \begin{table}\label{tb3}
    \center
    \caption{The $L^{\infty}$-norm errors and temporal convergence orders in viscous coefficients $\lambda=1$ and $0.01$, respectively.}
    \begin{tabular}{ccccccc} 
      \toprule
      \multicolumn{7}{c}{Compact difference scheme (\ref*{eqn3.10})-(\ref*{eqn3.13}) with $M=80$}\\
     \midrule
     & \multicolumn{2}{c}{$\lambda=1$} & &\multicolumn{2}{c}{$\lambda=0.01$}& \\
     \cmidrule{2-3}  \cmidrule{5-6}
      $N$ & $E_{\infty}(\tau,h)$ & $Order^{\tau}$ & $N$ & $E_{\infty}(\tau,h)$ & $Order^{\tau}$ &  \\
      \midrule
       64  &  3.832e-04  &  * & 64  &  7.567e-04  &  * &    \\
       128  &  1.515e-04  &  1.339 &128  &  2.815e-04  &  1.427 &    \\
       256 &  5.840e-05  &  1.375  & 256 &  1.021e-04  &  1.474 &    \\
       512 &  2.199e-05  &  1.409    &512 &  3.577e-05  &  1.512 &    \\
       1024&  8.150e-06 &  1.432    &1024&   1.247e-05  &  1.520 &  \\
      \bottomrule              
    \end{tabular}
\end{table}

\begin{table}\label{tb4}
  \center
  \caption{The $L^{\infty}$-norm errors and spatial convergence orders in viscous coefficients $\lambda=1$ and $0.01$, respectively.}
  \begin{tabular}{ccccccc} 
    \toprule
    \multicolumn{7}{c}{Compact difference scheme (\ref*{eqn3.10})-(\ref*{eqn3.13}) with $N=20000$}\\
   \midrule
   &\multicolumn{2}{c}{$\lambda=1$} & & \multicolumn{2}{c}{$\lambda=0.01$}& \\
   \cmidrule{2-3}  \cmidrule{5-6}
    $M$ & $E_{\infty}(\tau,h)$ & $Order^{h}$ & $M$ & $E_{\infty}(\tau,h)$ & $Order^{h}$ &  \\
    \midrule
    8  &  2.067e-03  &  * &  8  &  1.506e-02  &  *& \\
    16  &  1.370e-04  &  3.916   & 16  &  1.047e-03  &  3.846&     \\
    32 &  8.915e-06  &   3.941  & 32 &  7.584e-05  &  3.787 &   \\
    64 & 5.590e-07   &  3.995   &64 &  4.619e-06  &  4.037  &   \\
    \bottomrule              
  \end{tabular}
\end{table}
 

     \vskip 0.2mm
     \textbf{Example 3.} In the third example, we consider the original problem (\ref*{eq1.1})-(\ref*{eq1.3}) with $L=T=1$, chooes the initial conditions $\psi_1(x)=x^2(x-L)^2(x^2-Lx+L^2)$, $\psi_2(x)=0$, and the source term $f(x,t)=0$. The exact solution is unknown.

     \vskip 0.2mm
    In this case, since the exact solution is unknown, we need to redefine the errors and convergence orders. First, denote the error and convergence order in time as follows
    $$F_{\infty}^1(\tau,h):= \max\limits_{ 0\leq i \leq M }\left| u_i^N(\tau,h)-u_i^{2N}(\tau/2,h) \right|,\qquad
    Order^\tau_1:=\log_2\left(\frac{F_{\infty}^1(2\tau,h)}{F_{\infty}^1(\tau,h)}\right). $$
    Then, we denote the error and convergence order in space as follows
    $$G_{\infty}^1(\tau,h):= \max\limits_{ 0\leq i \leq M }| u_i^N(\tau,h)-u_{2i}^{N}(\tau,h/2) |,\qquad
    Order^h_1:=\log_2\Big(\frac{G_{\infty}^1(\tau,2h)}{G_{\infty}^1(\tau,h)}\Big). $$
    \vskip 0.2mm
    Tables 5 and 6 list $L^{\infty}$-norm errors and corresponding convergence orders for compact difference scheme (\ref*{eqn3.10})-(\ref*{eqn3.13}). It can be observed with selected different $\alpha$, the spatial and temporal convergence orders approximate fourth order and order $2-\alpha$, respectively. Moreover, Figures \ref{fig3} and \ref{fig4} more visually show the spatial and temporal convergence with different values $\alpha$, when $\mu_1=\mu_2=1$, $\lambda=0.01$. These numerical results further verify the theories. 
    \begin{table}\label{tb5}
      \center
      \caption{The $L^{\infty}$-norm errors and temporal convergence orders in viscous coefficients $\lambda=1$ and $0.01$, respectively.}
      \resizebox{\textwidth}{!}{
      \begin{tabular}{cccccccccccccc} 
        \toprule
        \multicolumn{14}{c}{Compact difference scheme (\ref*{eqn3.10})-(\ref*{eqn3.13}) with $M=256$, $\mu_1=\mu_2=1$}\\
       \midrule
       \multicolumn{7}{c}{$\lambda=1$} & \multicolumn{7}{c}{$\lambda=0.01$} \\
       \cmidrule{2-7}  \cmidrule{9-14}
         \multirow{2}{*}{\centering $N$} & \multicolumn{2}{c}{$\alpha=0.05$} & \multicolumn{2}{c}{$\alpha=0.5$} & \multicolumn{2}{c}{$\alpha=0.85$} & \multirow{2}{*}{\centering $N$}  & \multicolumn{2}{c}{$\alpha=0.05$} & \multicolumn{2}{c}{$\alpha=0.5$} & \multicolumn{2}{c}{$\alpha=0.85$} \\
       \cmidrule{2-3}  \cmidrule{4-5}  \cmidrule{6-7}  \cmidrule{9-10}  \cmidrule{11-12}  \cmidrule{13-14}
        & $F_{\infty}^1(\tau,h)$ & $Order_1^{\tau}$ & $F_{\infty}^1(\tau,h)$ & $Order_1^{\tau}$ & $F_{\infty}^1(\tau,h)$ & $Order_1^{\tau}$ &  &  $F_{\infty}^1(\tau,h)$ & $Order_1^{\tau}$ & $F_{\infty}^1(\tau,h)$ & $Order_1^{\tau}$ & $F_{\infty}^1(\tau,h)$ & $Order_1^{\tau}$   \\
        \midrule
        16   & 2.868e-04 & *    &1.384e-04 & *    &8.506e-04 &*     &  32  &2.384e-07  &*     &2.094e-06 &*    &1.911e-05 &* \\
        32  & 7.184e-05 &1.997 &5.346e-05 &1.372 &4.070e-04 &1.063 &  64 &5.844e-08  &2.034 &7.857e-07 &1.414 &8.702e-06 &1.134 \\   
         64  & 1.751e-05 &2.037 &1.974e-05 & 1.437&1.903e-04 &1.097 &  128 &1.426e-08  &2.035 &2.891e-07 &1.442 &3.943e-06 &1.142 \\	
        128  & 4.478e-06 &1.967 &7.145e-06 & 1.466&8.681e-05 &1.132 &  256 &3.597e-09  &1.987 &1.051e-07 &1.460 &1.782e-06 &1.146 \\    
        \bottomrule              
      \end{tabular}}
  \end{table}

  \begin{table}\label{tb6}
    \center
    \caption{The $L^{\infty}$-norm errors and spatial convergence orders in viscous coefficients $\lambda=1$ and $0.01$, respectively.}
    \resizebox{\textwidth}{!}{
    \begin{tabular}{cccccccccccccc} 
      \toprule
      \multicolumn{14}{c}{Compact difference scheme (\ref*{eqn3.10})-(\ref*{eqn3.13}) with $N=1024$, $\mu_1=\mu_2=1$}\\
     \midrule
     \multicolumn{7}{c}{$\lambda=1$} & \multicolumn{7}{c}{$\lambda=0.01$} \\
     \cmidrule{2-7}  \cmidrule{9-14}
       \multirow{2}{*}{\centering $M$} & \multicolumn{2}{c}{$\alpha=0.25$} & \multicolumn{2}{c}{$\alpha=0.5$} & \multicolumn{2}{c}{$\alpha=0.75$} & \multirow{2}{*}{\centering $M$}  & \multicolumn{2}{c}{$\alpha=0.25$} & \multicolumn{2}{c}{$\alpha=0.5$} & \multicolumn{2}{c}{$\alpha=0.75$} \\
     \cmidrule{2-3}  \cmidrule{4-5}  \cmidrule{6-7}  \cmidrule{9-10}  \cmidrule{11-12}  \cmidrule{13-14}
      & $G_{\infty}^1(\tau,h)$ & $Order_1^{h}$ & $G_{\infty}^1(\tau,h)$ & $Order_1^{h}$ & $G_{\infty}^1(\tau,h)$ & $Order_1^{h}$ &  &  $G_{\infty}^1(\tau,h)$ & $Order_1^{h}$ & $G_{\infty}^1(\tau,h)$ & $Order_1^{h}$ & $G_{\infty}^1(\tau,h)$ & $Order_1^{h}$   \\
      \midrule
       8  & 1.194e-06 & *    &1.171e-06 & *    &6.954e-06&*     &  32  &5.077e-06  &*     &7.083e-07 &*    &4.690e-06 &* \\
       16 & 7.609e-08 &3.972 &7.314e-08 &4.000 &5.934e-07 &3.550 &  64 &3.489e-07  &3.863 &4.394e-08 &4.010 &3.346e-07 &3.809 \\   
       32  & 4.779e-09 &3.993 &4.571e-09 & 4.000&3.720e-08 &3.996 &  128 &2.209e-08  &3.981 &2.734e-09 &4.006 &2.596e-08 &3.691 \\	
       64 & 2.990e-10 &3.998 &2.857e-10 & 4.000&2.347e-09 &3.986 &  256 &1.390e-09  &3.990 &1.718e-10 &3.993 &1.703e-09 &3.928 \\    
      \bottomrule              
    \end{tabular}}
\end{table}
 
\begin{figure}
 \centering
 \includegraphics[width=0.75\textwidth]{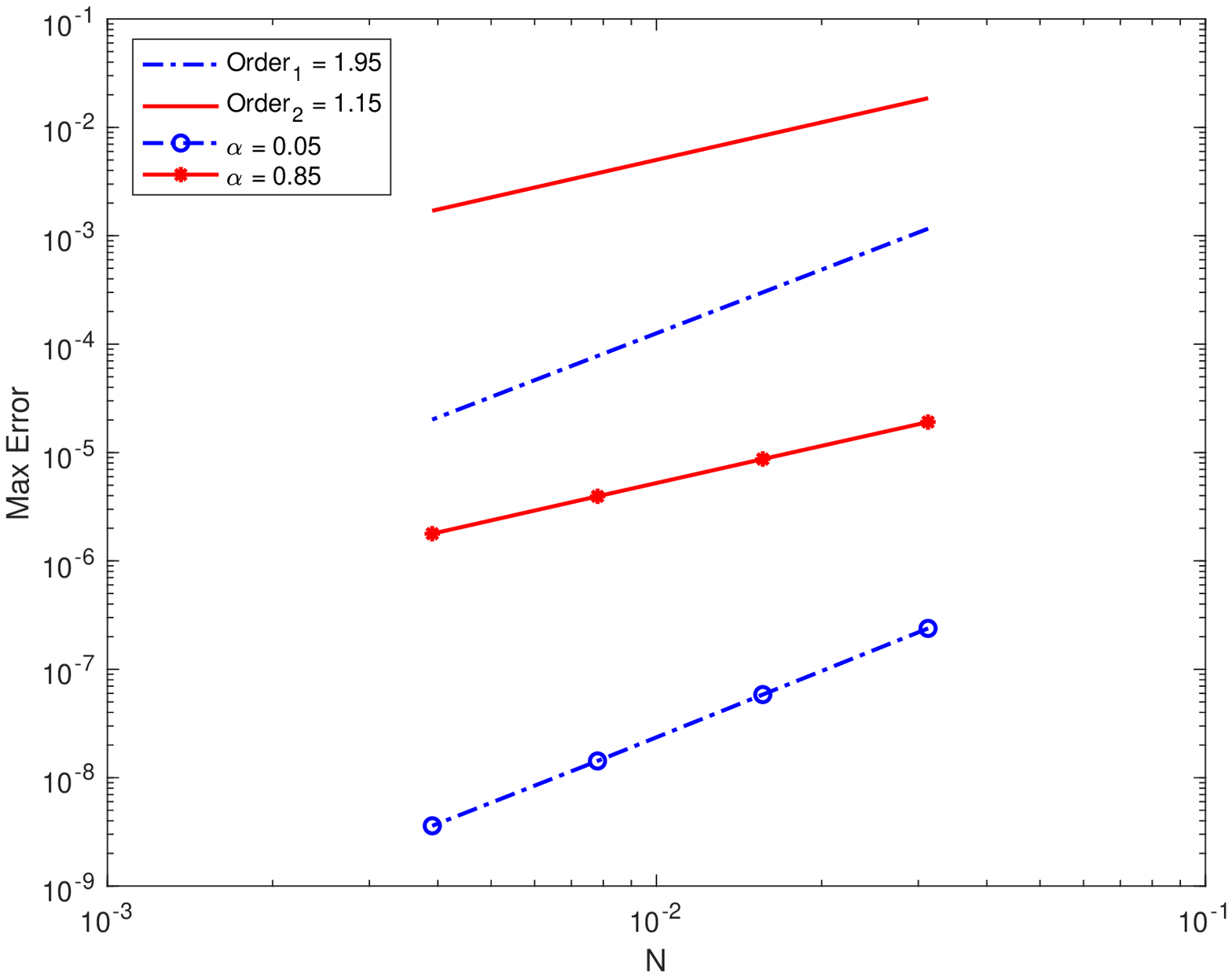}
\caption{Temporal convergence orders when $M=80$, $\mu_1=\mu_2=1$, and $\lambda=0.01$.}\label{fig3}
\end{figure}

\begin{figure}
    \centering
 \includegraphics[width=0.75\textwidth]{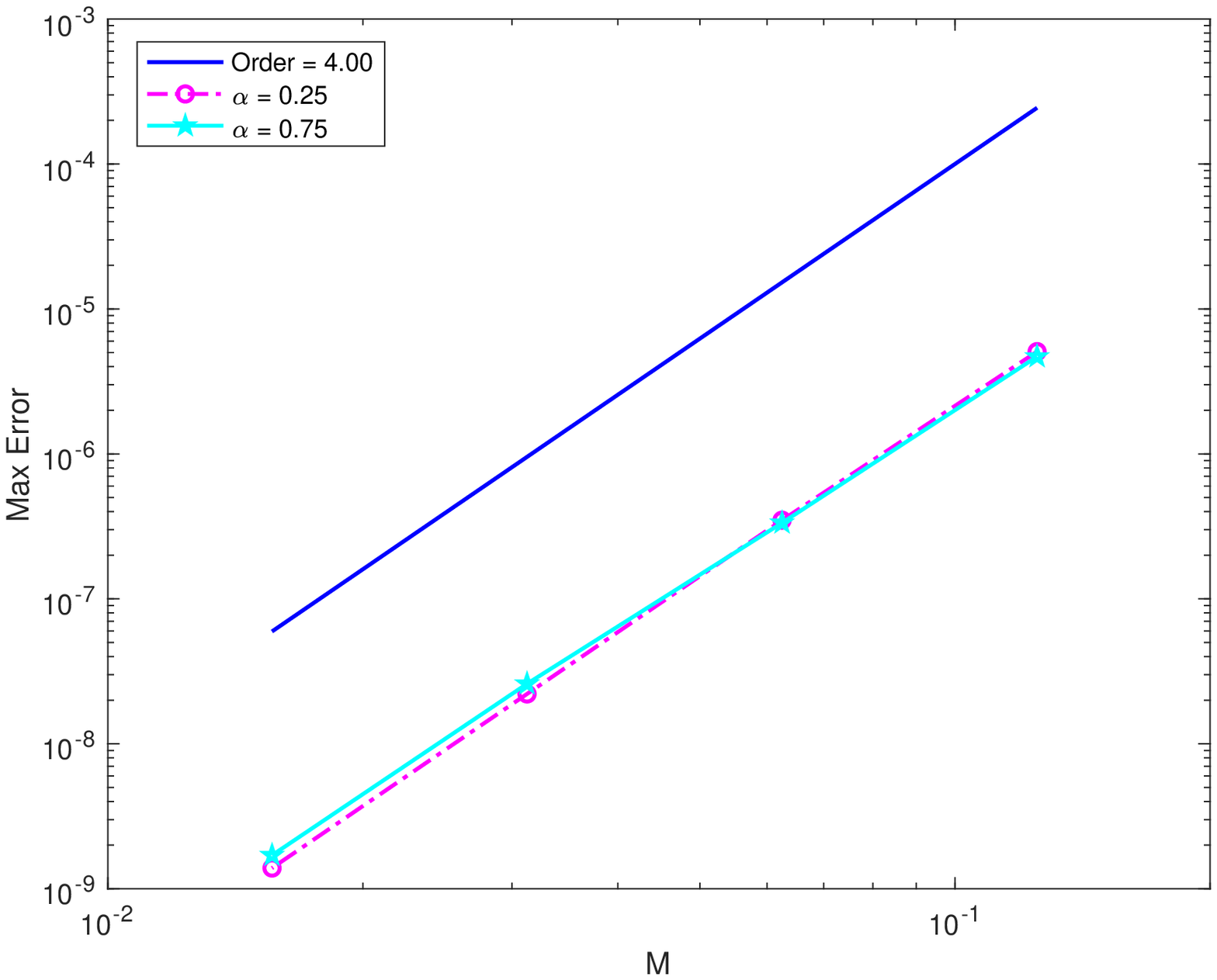}
 \caption{Spatial convergence orders when $N=1024$, $\mu_1=\mu_2=1$, and $\lambda=0.01$.}\label{fig4}
\end{figure}
     \section{Summary}
     In the current work, we have established a novel fourth-order compact difference scheme for mixed-type time-fractional Burgers' equation based on a developed nonlinear compact operator and reduction order technique. The convergence and stability in $L^{\infty}$-norm are deduced by discrete energy method. Three numerical examples illustrate the accuracy and effectiveness of the compact difference scheme. All numerical results are consistent with the theoretical analysis. In our future work, a generalized mixed-type time-fractional Burgers' equations will be considered by spatial compact difference method.

    \section*{References}

      \end{document}